\newtheorem{theorem}{Theorem}
\theoremstyle{plain}
\newtheorem{corollary}[theorem]{Corollary}
\newtheorem{lemma}[theorem]{Lemma}
\numberwithin{equation}{section}
\newcommand{\R}{\mathbb{R}}
\newcommand{\N}{\mathbb{N}}
\newcommand{\cK}{{\mathcal K}}
\newcommand{\cM}{{\mathcal M}}
\renewcommand{\phi}{\varphi}
\DeclareMathOperator{\Id}{Id}
\DeclareMathOperator{\degree}{deg}
\newcommand{\dist}{\text{\rm dist}}
\begin{document}

\title[Existence for nonlinear higher-order elliptic problems]{Existence of solutions to nonlinear, subcritical higher-order elliptic Dirichlet problems} 

\author{Wolfgang Reichel}
\address{W. Reichel \hfill\break 
Institut f\"ur Analysis, Universit\"at Karlsruhe, \hfill\break
D-76128 Karlsruhe, Germany}
\email{wolfgang.reichel@math.uni-karlsruhe.de}

\author{Tobias Weth}
\address{T. Weth \hfill\break 
Institut f\"ur Mathematik, Johann Wolfgang Goethe-Universit\"at Frankfurt\hfill\break
D-60054 Frankfurt, Germany}
\email{weth@math.uni-frankfurt.de}
\date{\today}

\subjclass[2000]{Primary: 35J40; Secondary: 35B45}
\keywords{Higher order equation, existence, topological degree, Liouville theorems}

\begin{abstract} 
We consider the $2m$-th order elliptic boundary value problem 
$Lu=f(x,u)$ on a bounded smooth domain $\Omega\subset\R^N$ with Dirichlet boundary conditions on 
$\partial\Omega$. The operator $L$ is a uniformly elliptic linear operator of order 
$2m$ whose principle part is of the form $\big(-\sum_{i,j=1}^N a_{ij}(x) 
\frac{\partial^2}{\partial x_i\partial x_j}\big)^m$. 
We assume that $f$ is superlinear at the origin and satisfies $\lim \limits_{s\to\infty}\frac{f(x,s)}{s^q}=h(x)$, 
$\lim \limits_{s\to-\infty}\frac{f(x,s)}{|s|^q}=k(x)$, where $h,k\in C(\overline{\Omega})$ are positive functions and $q>1$ is subcritical. By combining degree theory with new and recently established a priori estimates, we prove the existence 
of a nontrivial solution. 

\end{abstract}

\maketitle



\section{Introduction}

Let $\Omega\subset\R^N$ be a bounded smooth domain. On $\Omega$ we consider the uniformly elliptic operator
\begin{equation}
L = \Bigl(- \sum_{i,j=1}^N a_{ij}(x) 
\frac{\partial^2}{\partial x_i\partial x_j}\Bigr)^m 
+\sum_{0\leq |\alpha|\leq 2m-1} b_\alpha(x) D^\alpha 
\label{operator}
\end{equation}
with coefficients $b_\alpha\in C^\alpha(\overline{\Omega})$ and 
$a_{ij}\in C^{2m-2,\alpha}(\overline{\Omega})$ such that there exists a 
constant $\lambda>0$ with $\lambda^{-1} |\xi|^2 \leq \sum_{i,j=1}^N a_{ij}(x)\xi_i\xi_j \leq 
\lambda |\xi|^2$ for all $\xi\in \R^N$, $x\in \Omega$. We are interested in nontrivial solutions of the semilinear boundary value problem 
\begin{equation}
L u = f(x,u) \mbox{ in } \Omega, \quad u=\frac{\partial}{\partial\nu}u=
\ldots=\left(\frac{\partial}{\partial\nu}\right)^{m-1} u =0 \mbox{ on } 
\partial\Omega,
\label{basic}
\end{equation}
where $\nu$ is the unit exterior normal on $\partial\Omega$ and $f$ is a nonlinearity which is to be specified later. The main difficulties in proving existence results for this problem are the following:
\begin{itemize}
\item[1.] \eqref{basic} has no variational structure (in general), so critical point theorems do not apply;
\item [2.] The operator $L$ does not satisfy the maximum principle (in general) unless $m=1$. In the second order case, the maximum principle is a basic 
requirement to translate \eqref{basic} into a fixed point problem for an order preserving operator, which in turn makes it possible to use topological degree (or fixed point) theory in cones or invariant order intervals given by a pair of sub- and supersolutions. 
\item [3.] In the case $m>1$, a priori bounds for (certain classes of) solutions are harder to obtain than in the second order case, which makes it 
difficult to find solutions to \eqref{basic} via global bifurcation theory.   
\end{itemize}
In a recent paper, we have proved a priori bounds for solutions of \eqref{basic} in the case of superlinear nonlinearities $f(x,u)$ with subcritical growth 
satisfying an asymptotic condition. More precisely, we assumed:
\begin{itemize}
\item[(H1)] $f:\Omega\times\R\to\R$ is uniformly continuous in bounded subsets of $\Omega\times\R$ and there exists $q>1$ if $N\leq 2m$ and $1<q<\frac{N+2m}{N-2m}$ if $N>2m$ and two positive, continuous functions $k,h: \overline{\Omega}\to (0,\infty)$ such that 
$$
\lim_{s\to +\infty} \frac{f(x,s)}{s^q} = h(x), \quad 
\lim_{s\to-\infty} \frac{f(x,s)}{|s|^q}= k(x)\qquad \text{uniformly with respect to $x\in \overline{\Omega}$.}
$$
\end{itemize}

\begin{theorem}[Reichel, Weth \cite{ReichelWeth}]
\label{reichel-weth-alt}
If $f:\Omega\times\R\to\R$ satisfies (H1) then there exists a constant $C>0$ depending only on the data 
$a_{ij}, b_\alpha, \Omega, N, q, h, k$ such that $\|u\|_\infty \leq C$ for 
every solution $u\in C^{2m,\alpha}(\overline{\Omega})$ of \eqref{basic}. 
\end{theorem}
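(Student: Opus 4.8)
The plan is to prove this by a rescaling (blow-up) argument combined with Liouville-type theorems. Suppose, for contradiction, that there is a sequence of solutions $u_k\in C^{2m,\alpha}(\overline\Omega)$ of \eqref{basic} with $\|u_k\|_\infty\to\infty$. Rather than blowing up at a point where $|u_k|$ is maximal (which interacts badly with $\partial\Omega$), I would use a doubling argument: working with $M_k(x):=|u_k(x)|^{(q-1)/(2m)}$, one produces points $x_k\in\overline\Omega$ and radii $\lambda_k\to\infty$ with $M_k(x_k)\to\infty$ and
\[
M_k(x)\le 2M_k(x_k)\qquad\text{whenever } |x-x_k|\le \lambda_k/M_k(x_k).
\]
Put $a:=\tfrac{2m}{q-1}$, $\mu_k:=M_k(x_k)^{-1}\to 0$, and $v_k(y):=\mu_k^{a}\,u_k(x_k+\mu_k y)$, so that $|v_k(0)|=1$, $|v_k|\le 2^{a}$ on the expanding balls $\{|y|\le\lambda_k\}$, and $v_k$ solves
\[
\tilde L_k v_k(y)=\mu_k^{a+2m}\,f\bigl(x_k+\mu_k y,\ \mu_k^{-a}v_k(y)\bigr),
\]
where $\tilde L_k$ has principal part $\bigl(-\sum_{i,j}a_{ij}(x_k+\mu_k y)\partial_{y_i}\partial_{y_j}\bigr)^m$ and lower-order coefficients carrying positive powers of $\mu_k$ (since $|\alpha|\le 2m-1$ the term $b_\alpha D^\alpha$ becomes $\mu_k^{2m-|\alpha|}b_\alpha(x_k+\mu_k y)D^\alpha_y$).

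Since (H1) forces $|f(x,s)|\le C(1+|s|^q)$, the right-hand side above is uniformly bounded on compact sets. Hence interior $L^p$ and Schauder estimates for the uniformly elliptic operator $\tilde L_k$ — together with their boundary analogues after flattening $\partial\Omega$, which is legitimate because the rescaled boundary becomes increasingly flat — give, along a subsequence, $v_k\to v$ in $C^{2m}_{\mathrm{loc}}$. The lower-order terms of $\tilde L_k$ vanish in the limit and $a_{ij}(x_k+\mu_k y)\to a_{ij}(x_0)$ with $x_0:=\lim x_k$; using the uniform convergence in (H1) to identify the nonlinear term pointwise (it tends to $h(x_0)v^q$ on $\{v>0\}$, to $-k(x_0)|v|^q$ on $\{v<0\}$, and to $0$ on $\{v=0\}$, while remaining dominated), the limit $v$ is a bounded, nontrivial solution — $|v(0)|=1$ — of
\[
\Bigl(-\sum_{i,j=1}^N a_{ij}(x_0)\frac{\partial^2}{\partial y_i\partial y_j}\Bigr)^{\!m} v = h(x_0)(v^+)^q-k(x_0)(v^-)^q
\]
on $\R^N$ if $\dist(x_k,\partial\Omega)/\mu_k\to\infty$, and on a half-space $\R^N_+$ together with $v=\partial_\nu v=\dots=(\partial_\nu)^{m-1}v=0$ on $\partial\R^N_+$ if that quotient stays bounded. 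In the latter case one checks, via the boundary Schauder bound and Taylor's theorem at $\partial\Omega$, that $\dist(x_k,\partial\Omega)/\mu_k$ cannot tend to $0$ (else $|v_k(0)|\to 0$, a contradiction), so $0$ keeps a positive distance to $\partial\R^N_+$. After a linear change of variables the principal part becomes $(-\Delta)^m$.

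It remains to invoke Liouville theorems: for $1<q$ (and $q<\frac{N+2m}{N-2m}$ when $N>2m$) the only bounded solution of $(-\Delta)^m v=\hat h(v^+)^q-\hat k(v^-)^q$ with positive constants $\hat h,\hat k$ is $v\equiv 0$, both on $\R^N$ and on a half-space with the above Dirichlet conditions; this contradicts $|v(0)|=1$ and finishes the proof. This last step is the crux. For $m=1$ the required statements are classical (Gidas--Spruck on $\R^N$, and its half-space counterpart via the method of moving planes), both resting on the maximum principle. For $m\ge 2$ neither the maximum principle nor the moving plane method is available, so the sign-changing polyharmonic Liouville theorems — on $\R^N$ and, harder, on a half-space — have to be obtained by other means (monotonicity and Pohozaev-type identities, or rescaled test-function arguments), and establishing them throughout the subcritical range is the main obstacle. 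A secondary, more routine, difficulty is the bookkeeping in the boundary case — flattening $\partial\Omega$, propagating the Dirichlet data to the limit, and ruling out escape of the blow-up to the boundary — all of which the doubling argument is designed to streamline.
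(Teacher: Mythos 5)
Your overall architecture (blow-up with a doubling/normalization device, passage to limit problems on $\R^N$ or a half-space, then Liouville theorems) is the right one and matches the strategy of the cited proof. But there is a sign error that derails the key step. By (H1), $f(x,s)/|s|^q\to k(x)>0$ as $s\to-\infty$, so on the set $\{v<0\}$ the rescaled nonlinearity converges to $+\,k(x_0)|v|^q$, not to $-k(x_0)|v|^q$. The correct limit equation is therefore
$$
\Bigl(-\sum_{i,j}a_{ij}(x_0)\partial_{y_i}\partial_{y_j}\Bigr)^{m}v \;=\; g(v),\qquad g(s)=h(x_0)(s^+)^q+k(x_0)(s^-)^q\;\ge 0,
$$
with $g$ convex and strictly positive for $s\neq 0$. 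This sign is not a cosmetic point: it is precisely what makes the argument close. Because the right-hand side dominates the nonnegative convex function $g$, a spherical-averaging argument (Lemma~\ref{Liou}(a) in this paper; a representation formula on the half-space) forces the limit $v$ to be \emph{nonnegative}, and one then invokes the Liouville theorems for nonnegative solutions of $(-\Delta)^m v=v^q$ on $\R^N$ (Wei--Xu, Theorem~\ref{rn}) and on $\R^N_+$ (Theorem~\ref{rn_plus}). No Liouville theorem for genuinely sign-changing solutions is needed or used.

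With your (incorrect) limit equation $(-\Delta)^m v=\hat h(v^+)^q-\hat k(v^-)^q$ the reduction to the nonnegative case is impossible, and you are then forced to posit, as the ``crux,'' a Liouville theorem for sign-changing solutions throughout the subcritical range. You do not prove this, and no such theorem is available (it is open even for $m=1$ in general); so as written the proof has an unfilled essential gap in addition to the sign mistake. Fix the sign, add the positivity step for the limit (convexity of $g$ plus the averaged-Laplacian argument in the whole space, and the half-space analogue), and the rest of your outline — the scaling exponent $a=\tfrac{2m}{q-1}$, the vanishing of lower-order coefficients, the dichotomy on $\dist(x_k,\partial\Omega)/\mu_k$, and the exclusion of the quotient tending to $0$ via the boundary gradient bound — is sound and agrees with the paper's scheme.
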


This result can be seen as a first step towards existence results via degree theory. In order to state the main theorem of the present paper, we introduce additional assumptions on $f$. 

\begin{itemize}
\item[(H2)] For all $x\in \Omega$ the function $f(x,s)$ is continuously differentiable with respect to $s$ and $f(x,s)$, $\partial_s f(x,s)$ are $\alpha$-H\"older continuous in $x$ uniformly for $x\in\Omega$ and $s$ in bounded intervals. Moreover, $f(x,0)=\partial_s f(x,0)=0$. 
\item[(H3)] The operator $L$ has a bounded inverse $L^{-1}$ which maps $C^\alpha(\overline{\Omega})\to C^{2m,\alpha}(\overline{\Omega})$ with Dirichlet boundary conditions of order up to $m-1$.
\end{itemize}

\begin{theorem}
Suppose $\Omega\subset\R^N$ is a bounded domain with $\partial\Omega\in 
C^{2m,\alpha}$. Let $m\in \N$ and assume $f:\Omega\times\R\to\R$ satisfies (H1), (H2), (H3). Then \eqref{basic} has a nontrivial solution $u\in C^{2m,\alpha}(\overline{\Omega})$.
\label{ex}
\end{theorem}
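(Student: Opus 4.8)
The plan is to recast \eqref{basic} as a fixed point equation and apply degree theory, using the a priori bound of \thmref{reichel-weth-alt} on one side and a blow-up/homotopy argument that rules out the trivial solution on the other. By (H3) the operator $L^{-1}:C^\alpha(\overline\Omega)\to C^{2m,\alpha}(\overline\Omega)$ is bounded, and the embedding $C^{2m,\alpha}(\overline\Omega)\hookrightarrow C^\alpha(\overline\Omega)$ is compact, so the Nemytskii operator induced by $f$ followed by $L^{-1}$ defines a compact map $T:X\to X$ on the Banach space $X=\{u\in C^\alpha(\overline\Omega): u \text{ satisfies the Dirichlet conditions}\}$, and solutions of \eqref{basic} correspond exactly to fixed points of $T$, i.e.\ to zeros of $\Id-T$. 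By (H2), $f(x,0)=\partial_s f(x,0)=0$, so $T(0)=0$ and moreover $T$ is Fr\'echet differentiable at $0$ with $T'(0)=0$; hence $0$ is an isolated fixed point and the local index of $\Id-T$ at $0$ equals $\degree(\Id-T'(0),B_\rho(0),0)=1$ for small $\rho$.

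The next step is to compute the degree of $\Id-T$ on a large ball. Let $R>0$ be larger than the constant $C$ from \thmref{reichel-weth-alt} (in the $C^\alpha$ or $C^{2m,\alpha}$ norm; one checks via $L^{-1}$ and interpolation that an $L^\infty$ bound upgrades to a bound in $X$). I would introduce the homotopy $T_t(u)=L^{-1}\big[(1-t)f(\cdot,u)+t\,g(\cdot,u)\big]$ for a suitably chosen comparison nonlinearity $g$ still satisfying (H1), or more directly the homotopy that replaces $f(x,s)$ by $f(x,s)+t\,\sigma(x)$ with $\sigma>0$ a fixed positive function (a ``pushing'' term). The point of such a term is that for $t>0$ the homotoped problem has \emph{no} solution near $0$: testing against the positive eigenfunction of the formal adjoint principal part, or using the a priori structure, forces any solution to be bounded away from $0$; meanwhile the a priori bound of \thmref{reichel-weth-alt} must be shown to persist uniformly along the whole homotopy $t\in[0,1]$ (this is legitimate because $f(x,s)+t\sigma(x)$ satisfies (H1) with the same $h,k$ and $q$, and the constant $C$ in that theorem depends only on the listed data). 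Then $\degree(\Id-T_t,B_R(0),0)$ is constant in $t$; at $t=1$ the equation has no solution in $\overline{B_R(0)}$, so this common degree is $0$.

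Finally, combining the two computations: $\degree(\Id-T,B_R(0),0)=0$ while the local index at $0$ is $1$, so by excision $\degree(\Id-T,B_R(0)\setminus \overline{B_\rho(0)},0)=0-1=-1\neq 0$, which yields a fixed point $u$ of $T$ with $\rho<\|u\|<R$; in particular $u\neq 0$. Elliptic regularity (via (H3) and a bootstrap using (H2)) then gives $u\in C^{2m,\alpha}(\overline\Omega)$, completing the proof.

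The main obstacle is the middle step: exhibiting a homotopy that simultaneously (i) stays within the class of nonlinearities covered by the a priori bound of \thmref{reichel-weth-alt}, so that the ball $B_R$ contains no solutions on its boundary for any $t$, and (ii) destroys all solutions (not just the trivial one) at the endpoint $t=1$, forcing the degree on $B_R$ to vanish. Because $L$ does \emph{not} satisfy a maximum principle for $m>1$, the usual second-order trick of using positivity/sub- and supersolutions to show nonexistence for a large forcing term is unavailable; instead one must argue nonexistence either through the subcritical a priori estimate applied to a modified problem whose solution set is provably empty, or through a Pohozaev-type obstruction. Verifying that the constant $C$ from \thmref{reichel-weth-alt} can be taken uniform over the homotopy parameter — i.e.\ that adding a lower-order or bounded perturbation does not affect the conclusion — is the technical heart of the argument.
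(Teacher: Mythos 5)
Your proposal follows essentially the same route as the paper: the fixed-point reformulation via $L^{-1}$ and compactness of the embedding, local index $1$ at the origin from (H2) (since $d\cK(0)=0$), a homotopy that adds a constant forcing term $\lambda$, degree $0$ on a large ball because the problem with large forcing has no solutions, and excision to produce a nontrivial fixed point. The ``main obstacle'' you single out --- nonexistence at the endpoint of the homotopy together with a uniform a priori bound along it --- is exactly what the paper supplies via Theorem~\ref{non_ex} (proved in Section~\ref{sec:non_ex} by a rescaling/blow-up argument and Liouville theorems, not by testing against an eigenfunction) together with Theorem~\ref{reichel-weth-alt} applied to $f(x,s)+\lambda$ for $\lambda\in[0,\Lambda]$, so your skeleton coincides with the paper's proof.
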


We note that in many examples condition (H3) can be verified with the help of the Lax-Milgram Theorem and elliptic regularity, see Agmon, Douglis, Nirenberg \cite{ADN}. In particular, if $L$ is as in \eqref{operator}, then $L+\gamma$ satisfies (H3) if $\gamma>0$ is sufficiently large and if additionally $b_\alpha \in C^{|\alpha|-m}(\overline{\Omega})$ for $m<|\alpha|<2m$. This is true since the smoothness of the coefficients allows to write $L$ in divergence form and hence the quadratic form associated with $L+\gamma$ is coercive due to Garding's inequality, cf. Renardy-Rogers \cite{RR}, if $\gamma>0$ is sufficiently large.

As an intermediate step in the proof of Theorem \ref{ex}, we need to complement Theorem \ref{reichel-weth-alt} with the following a priori estimate for a parameter, which might be of independent interest. 

\begin{theorem}
Suppose $\Omega\subset\R^N$ is a bounded domain with $\partial\Omega\in 
C^{2m,\alpha}$. Let $m\in \N$ and assume $f:\Omega\times\R\to\R$ satisfies (H1). Then there exists a value $\Lambda=\Lambda(\Omega,L,f)$ such that for $\lambda\geq\Lambda$ the problem 
\begin{equation}
L u = f(x,u)+\lambda \mbox{ in } \Omega, \quad u=\frac{\partial}{\partial\nu}u=
\ldots=\left(\frac{\partial}{\partial\nu}\right)^{m-1} u =0 \mbox{ on } 
\partial\Omega.
\label{basic_lambda}
\end{equation}
has no solution $u\in C^{2m,\alpha}(\overline{\Omega})$.
\label{non_ex}
\end{theorem}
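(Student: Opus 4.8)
The plan is to derive a contradiction with a suitable Pohozaev/testing argument by pairing a solution of \eqref{basic_lambda} against a fixed positive eigenfunction, so that the large constant $\lambda$ forces the nonlinear term to grow, which in turn forces the solution to grow, but then (H1) makes the nonlinear term eventually dominate the linear operator. First I would set up the linear framework: since (H1) alone does not give (H3), I note that the \emph{formal} adjoint problem for the principal elliptic operator $L$ has a well-behaved situation only in special cases, so instead I would work with an auxiliary second-order problem. Specifically, let $\phi_1>0$ be the first Dirichlet eigenfunction of $-\sum_{i,j}a_{ij}(x)\partial_i\partial_j$ on $\Omega$ with eigenvalue $\mu_1>0$, normalized by $\int_\Omega\phi_1\,dx=1$. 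Then $\phi_1^m$ (or rather a carefully chosen power/combination making it a legitimate test function vanishing to order $m$ at the boundary) can be integrated against both sides of \eqref{basic_lambda}. The point is that $\int_\Omega (Lu)\,\psi\,dx$ can be estimated by $C\|u\|_{\infty}$ (or by a lower-order Sobolev norm of $u$) for an appropriate fixed test function $\psi$, using integration by parts and the boundary conditions, while $\int_\Omega f(x,u)\psi\,dx+\lambda\int_\Omega\psi\,dx$ involves $\lambda$ linearly.

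The core of the argument is then the following dichotomy. Suppose, for contradiction, that for a sequence $\lambda_n\to\infty$ there are solutions $u_n$ of \eqref{basic_lambda}. By (H1), for every $\eps>0$ there is $M_\eps$ with $f(x,s)\le \eps|s|^q+M_\eps$ for all $s$ (since $f(x,s)/|s|^q$ is bounded, being continuous and having finite limits at $\pm\infty$; in fact $f(x,s)\le C_1|s|^q+C_2$ globally), and similarly $f(x,s)\ge -C_1'|s| - C_2'$ — more precisely, since $f$ is superlinear near $0$ and has the stated asymptotics, $f(x,s)$ is bounded below on $\R$ by an affine function $-a|s|-b$. Testing \eqref{basic_lambda} with the fixed positive function $\psi$ above and using the lower bound on $f$ gives
\[
\lambda_n \int_\Omega \psi\,dx \;\le\; \int_\Omega (Lu_n)\psi\,dx + a\int_\Omega |u_n|\psi\,dx + b\int_\Omega \psi\,dx \;\le\; C\bigl(1+\|u_n\|_{L^1(\Omega)}\bigr),
\]
so $\|u_n\|_{L^1}\to\infty$. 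On the other hand, I would bound $\|u_n\|_{L^1}$ from above: using the global bound $f(x,s)\le C_1|s|^q+C_2$ and elliptic estimates for $L^{-1}$ on, say, $L^p$ spaces (or on $C^\alpha$ after a bootstrap), together with the subcritical exponent $q$, one gets $\|u_n\|_{W^{2m,p}}\le C(1+\|u_n\|_{L^{qp}}^q+\lambda_n)$, and by Sobolev embedding and interpolation this can be closed to yield $\|u_n\|_\infty \le C(1+\lambda_n^{1/(q)}+\dots)$ — in any case a \emph{sublinear} power of $\lambda_n$ when $q>1$. Feeding this polynomial-in-$\lambda_n$ bound on $\|u_n\|_{L^1}$ back into the lower estimate $\lambda_n\le C(1+\|u_n\|_{L^1})$ is not yet a contradiction; the sharper route is to argue as in the standard second-order proof (Brezis–Turner / de Figueiredo type): test with $\psi$, use convexity-type lower bound $f(x,s)\ge \delta|s|^q - C_\delta$ valid because the limits are positive, obtaining $\lambda_n\int\psi \le \int(Lu_n)\psi - \delta\int|u_n|^q\psi + C_\delta$, then by Hölder $\int(Lu_n)\psi \le (\int|u_n|^q\psi)^{1/q}\,(\text{lower-order norm of }\psi\text{-weighted }L u_n)$; the $-\delta\int|u_n|^q\psi$ term dominates the (sublinear in $\int|u_n|^q\psi$) coupling term, so the right-hand side is bounded above independently of $n$, contradicting $\lambda_n\to\infty$. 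Choosing $\Lambda$ to be one plus that uniform bound finishes the proof.

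The main obstacle I expect is the absence of the maximum principle and of a good dual problem for $L$ when $m>1$: in the second-order case one tests against the first eigenfunction of $L^*$, which is positive, and this is exactly what makes the $\int(Lu)\psi$ term manageable and sign-definite in the relevant direction. For $m>1$, $\psi$ must be chosen to vanish to order $m$ on $\partial\Omega$ so that all boundary terms in the $2m$-fold integration by parts disappear, yet must remain strictly positive in the interior to capture $\int|u_n|^q\psi$ from below; reconciling these (e.g. taking $\psi = \phi_1^{2m}$ or a mollified distance-to-the-boundary power, and carefully tracking that $\int (Lu)\psi$ only involves $u$ through derivatives of $\psi$ of order $\le 2m$, hence is controlled by $\|u\|_\infty$ or by $\|u\|_{L^q,\,\text{weighted}}$ via interpolation with the $W^{2m,p}$ bound) is the delicate technical point. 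A secondary obstacle is making the bootstrap for the a priori $L^p$/$C^\alpha$ bound uniform in $\lambda$, i.e. ensuring the constants in the elliptic estimate for $L^{-1}$ (granted by Agmon–Douglis–Nirenberg theory) do not degenerate; this is routine but must be stated with care since (H3) is not assumed in this theorem.
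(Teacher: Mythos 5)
Your route --- a Brezis--Turner/de Figueiredo-style test-function argument --- is genuinely different from the paper's, which proves Theorem~\ref{non_ex} by a blow-up analysis: one rescales a hypothetical sequence $(u_k,\lambda_k)$ with $\lambda_k\to\infty$ around near-extremum points, distinguishes cases according to whether $\|u_k\|_\infty$ stays bounded, whether the rescaled distance to $\partial\Omega$ stays bounded, and whether $\lambda_k/\|u_k\|_\infty^q$ stays bounded, and then rules out each limit problem via the Liouville theorems (Theorems~\ref{rn} and~\ref{rn_plus}) and Lemma~\ref{Liou}, which in particular excludes bounded entire solutions of $(-\Delta)^m v=1$.

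The gap in your proposal sits exactly at the point you yourself flag as ``the delicate technical point,'' and I do not see how to close it. After the $2m$-fold integration by parts the linear term is $\int_\Omega u_n\,L^*\psi\,dx$, and to conclude you must absorb it into $\delta\int_\Omega|u_n|^q\psi\,dx$. By Young's inequality this requires $\int_\Omega|L^*\psi|^{q'}\,\psi^{-q'/q}\,dx<\infty$ with $q'=q/(q-1)$. But $\psi$ must vanish to order $m$ on $\partial\Omega$ (to kill the boundary terms pairing $D^\beta u_n$, $m\le|\beta|\le 2m-1$, with $D^\gamma\psi$, $|\gamma|\le m-1$), so $\psi\sim d(x)^m$ near the boundary, while for $m>1$ and a generic $L$ the function $L^*\psi$ is bounded but does \emph{not} vanish on $\partial\Omega$ and has no sign. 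The weight integral then forces $m/(q-1)<1$, i.e.\ $q>m+1$, which excludes most of the range of $q$ admitted by (H1). In the second-order case this obstruction is invisible because one tests with the positive first eigenfunction of $L^*$, for which $L^*\phi_1=\mu_1\phi_1$ vanishes at exactly the same rate as the weight; for $m>1$ no positive adjoint eigenfunction is available, which is precisely difficulty~2 listed in the introduction. Your fallback routes do not repair this: bounding $\int u_nL^*\psi$ by $\|u_n\|_\infty$ merely shifts the problem to bounding $\|u_n\|_\infty$ by the weighted $L^q$-quantity (false in general), and the bootstrap $\|u_n\|_{W^{2m,p}}\le C(1+\|u_n\|_{L^{pq}}^q+\lambda_n)$ does not close for all subcritical $q$ --- that failure is exactly why the paper uses a rescaling argument rather than direct elliptic estimates.
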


Due to the lack of the maximum principle for higher order equations, we have no sign information on the solution provided by Theorem \ref{ex}. By the same reason, it is important that Theorems \ref{reichel-weth-alt} and \ref{non_ex} hold with no restriction on the sign of the solutions. We also point out that we make no assumption concerning the shape of the domain.

We recall that the proof of Theorems \ref{reichel-weth-alt} is carried out by a rescaling method in the spirit of the seminal work of Gidas and 
Spruck \cite{GS1} (but without a priori information on the sign of the solutions) and by investigating the corresponding limit problems. In particular, the following Liouville type theorems are used.

\begin{theorem}[Wei, Xu \cite{WeiXu}] Let $m \in \N$ and assume that $q>1$ if $N\leq 2m$ and 
$1<q<\frac{N+2m}{N-2m}$ if $N>2m$. If $u$ is a classical non-negative solution of 
\begin{equation}
\label{eq-rn}
(-\Delta)^m u = u^q \mbox{ in } \R^N,
\end{equation}
then $u\equiv 0$.
\label{rn}
\end{theorem}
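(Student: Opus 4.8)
The plan is to combine the test-function method with a Rellich--Pohozaev identity, using in between a structural analysis that supplies the signs of the iterated Laplacians of $u$ and the decay of $u$ at infinity. By the classical test-function argument of Mitidieri and Pohozaev, \eqref{eq-rn} already has no nontrivial non-negative solution when $1<q\le\frac{N}{N-2m}$ (with the convention that the right-hand side equals $+\infty$ if $N\le 2m$); so from now on assume $N>2m$ and $\frac{N}{N-2m}\le q<\frac{N+2m}{N-2m}$.

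\emph{Step 1: signs of the iterated Laplacians and boundedness.} Set $u_j:=(-\Delta)^ju$, so $u_0=u\ge 0$ and $u_m=u^q\ge 0$; because $(-\Delta)^m$ satisfies no maximum principle, the non-negativity of $u_1,\dots,u_{m-1}$ must be proved. I would pass to the spherical average $\widetilde u(r)$ of $u$ over $\partial B_r(0)$. Convexity of $t\mapsto t^q$ (here $q>1$ enters) and Jensen's inequality give $(-\Delta)^m\widetilde u\ge\widetilde u^{\,q}\ge 0$ in the radial sense, with $\Delta=\partial_{rr}+\frac{N-1}{r}\partial_r$. Writing $w_j$ for the average of $u_j$, the relations $-(r^{N-1}w_j')'=r^{N-1}w_{j+1}$, $j=0,\dots,m-1$, can be analysed from $j=m-1$ downwards: each $r^{N-1}w_j'$ is monotone and vanishes at $r=0$, hence $w_j$ is monotone, and if some $w_j$ were negative somewhere the lower layers would force $\widetilde u$ to grow at a polynomial rate incompatible with $(-\Delta)^m\widetilde u\ge\widetilde u^{\,q}$ (using $q>1$ once more). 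Thus $w_j\ge 0$ on $(0,\infty)$, each $w_j$ decreases to $0$, and integrating the radial relations layer by layer yields integrability such as $\int_{\R^N}|y|^{-(N-2m)}u(y)^q\,dy<\infty$. Repeating this about an arbitrary centre, together with standard regularity-at-infinity arguments for non-negative polyharmonic functions, gives $u_j\ge 0$ pointwise for all $j$ and a pointwise bound on $u$. I expect this to be the main obstacle: the lack of a maximum principle is felt precisely here, and the delicate point is to exclude solutions growing at infinity, which forces one to track signs and growth rates through all $m$ layers of the radial system.

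\emph{Step 2: integral representation and decay.} With $u_j\ge 0$ for all $j$ and $u$ bounded, invert $-\Delta$ one layer at a time: $u_{m-1}$ is non-negative and superharmonic on $\R^N$, hence is the Newtonian potential of $u_m$ plus a non-negative harmonic function, and boundedness forces the harmonic summand to vanish; iterating $m$ times gives
\[
u(x)=c_{N,m}\int_{\R^N}\frac{u(y)^q}{|x-y|^{\,N-2m}}\,dy,\qquad c_{N,m}>0 .
\]
In particular $\int_{\R^N}u^q<\infty$, and splitting the integral over the regions $\{|y|\le|x|/2\}$, $\{|x|/2\le|y|\le 2|x|\}$ and $\{|y|\ge 2|x|\}$ and using $q\ge\frac{N}{N-2m}$ one reads off $u(x)=O(|x|^{-(N-2m)})$ and $|\nabla^ku(x)|=O(|x|^{-(N-2m)-k})$ for $k\le 2m-1$, as $|x|\to\infty$.

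\emph{Step 3: Rellich--Pohozaev identity and conclusion.} The decay of Step~2 makes $u^{q+1}$ integrable over $\R^N$. Applying the Rellich--Pohozaev identity for $(-\Delta)^mu=u^q$ on the ball $B_R(0)$ and letting $R\to\infty$, each boundary integral over $\partial B_R$ is $O(R^{2m-N})$ and hence tends to $0$ (this is where $N>2m$ and the derivative bounds of Step~2 are used), so the identity collapses to
\[
\Bigl(\frac{N-2m}{2}-\frac{N}{q+1}\Bigr)\int_{\R^N}u^{q+1}\,dx=0 .
\]
Since $q<\frac{N+2m}{N-2m}$ is equivalent to $\frac{N}{q+1}>\frac{N-2m}{2}$, the prefactor is strictly negative; therefore $\int_{\R^N}u^{q+1}=0$ and $u\equiv 0$.
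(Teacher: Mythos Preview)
The paper does not give its own proof of this statement; Theorem~\ref{rn} is quoted from Wei and Xu \cite{WeiXu} and used only as a black-box Liouville theorem in the blow-up arguments of Section~\ref{sec:non_ex} and Section~\ref{no-boundedness}. There is therefore no proof in the paper to compare your proposal against.

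That said, your outline is close in spirit to what Wei and Xu actually do for Steps~1 and~2: establish the non-negativity of the iterated Laplacians $(-\Delta)^j u$ via a spherical-averaging argument exploiting the convexity of $s\mapsto s^q$ (compare the very similar reasoning in Lemma~\ref{Liou}(a) of the present paper), and then derive the Riesz-potential representation. Wei and Xu conclude with the method of moving spheres rather than a Rellich--Pohozaev identity; your Step~3 is a legitimate alternative in the strictly subcritical range. The weak point of your sketch is the decay claim at the end of Step~2: the assertions $\int_{\R^N}u^q<\infty$ and $u(x)=O(|x|^{-(N-2m)})$ do \emph{not} follow immediately from the integral representation together with the finiteness of $\int_{\R^N}|y|^{-(N-2m)}u(y)^q\,dy$, and obtaining them is one of the genuinely delicate parts of the whole argument (it typically requires a bootstrap, or a Kelvin-transform/moving-plane step). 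Without that decay the boundary terms in the Pohozaev identity on $B_R$ need not vanish as $R\to\infty$. This is a gap that can be filled with known techniques, but ``one reads off'' understates the work involved.
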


\begin{theorem} 
Let $m \in \N$ and assume that $q>1$ if $N\leq 2m$ and $1<q\leq \frac{N+2m}{N-2m}$ 
if $N>2m$. If $u$ is a classical non-negative 
solution of 
\begin{equation}
  \label{eq:liouville-half-space}
(-\Delta)^m u = u^q \mbox{ in } \R^N_+, \quad u=
\frac{\partial}{\partial x_1}u=\ldots=
\frac{\partial^{m-1}}{\partial x_1^{m-1}} u =0 \mbox{ on }\partial \R^N_+
\end{equation}
then $u\equiv 0$.
\label{rn_plus}
\end{theorem}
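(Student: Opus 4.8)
\emph{Strategy.} I would convert \eqref{eq:liouville-half-space} into an integral equation governed by the polyharmonic Green's function of the half-space, and then run the method of moving planes in its integral form; working with integrals is what makes it possible to dispense with the maximum principle, which is not available for $(-\Delta)^m$ when $m\ge 2$. By Boggio's formula the Dirichlet problem for $(-\Delta)^m$ on $\R^N_+$ has an explicit Green's function $G(x,y)$ that is strictly positive on $\R^N_+\times\R^N_+$. Using this positivity together with the chain of subharmonicity relations furnished by $(-\Delta)^m u=u^q\ge 0$ and $u\ge 0$ — which also serve to control the growth of $u$ at infinity and to discard any additive polyharmonic component — one first shows that every classical non-negative solution satisfies the representation $u(x)=\int_{\R^N_+}G(x,y)\,u(y)^q\,dy$ for all $x\in\R^N_+$.

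\emph{Moving planes and dimension reduction.} Next I would read off from the explicit formula for $G$ the comparison inequalities required by the moving-plane scheme: writing $x^\lambda:=(2\lambda-x_1,x')$ for the reflection of $x$ in the hyperplane $\{x_1=\lambda\}$ with $\lambda>0$, one has $G(x,y)\ge G(x^\lambda,y)$, $G(x,y)\ge G(x,y^\lambda)$ and $G(x^\lambda,y^\lambda)=G(x,y)$ whenever $x_1,y_1>\lambda$. Feeding these into the standard integral moving-plane argument — comparing $u(x)$ with $u(x^\lambda)$ on the half-space $\{x_1>\lambda\}$, estimating the negative part by a Hardy--Littlewood--Sobolev inequality, starting with $\lambda$ large and decreasing it — one is led to $u(x)\ge u(x^\lambda)$ for all $\lambda>0$ and all $x$ with $x_1>\lambda$; since $\lambda>0$ is arbitrary, this says exactly that $u$ is non-decreasing in the variable $x_1$. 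Setting $v(x'):=\lim_{t\to\infty}u(t,x')$ and excluding the possibility $v\equiv+\infty$, interior $C^{2m,\alpha}$ estimates applied to the translates $u_t(x_1,x'):=u(x_1+t,x')$ give convergence $u_t\to v$ in $C^{2m}_{\mathrm{loc}}$, whence $v\ge 0$ solves $(-\Delta)^m v=v^q$ in $\R^{N-1}$. Since $1<q\le\frac{N+2m}{N-2m}<\frac{(N-1)+2m}{(N-1)-2m}$ when $N-1>2m$, and $q>1$ is unrestricted when $N-1\le 2m$, Theorem~\ref{rn} in dimension $N-1$ forces $v\equiv 0$; the monotonicity then gives $0\le u(t,x')\le v(x')=0$, that is, $u\equiv 0$.

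\emph{Main obstacle.} The substantive work is the analysis at infinity. Establishing the integral representation for an \emph{arbitrary} classical solution, with no a priori bound assumed, rests on the iterated subharmonicity argument together with a careful treatment of the fundamental solution of $(-\Delta)^m$, with the usual case distinction $N>2m$, $N=2m$, $N<2m$; the same growth information is what is needed both to start the moving-plane procedure (where $u$ need not decay as $x_1\to\infty$) and to rule out $v\equiv+\infty$ in the final step. Two shortcuts are worth noting: in the blow-up scheme for which Theorem~\ref{rn_plus} is used, the limiting solution comes with size information that trivialises these growth issues; and when $q=\frac{N+2m}{N-2m}$ one can alternatively run a Rellich--Pohozaev identity on the half-balls $B_R\cap\R^N_+$, where the interior term vanishes and the flat part of the boundary contributes with a favourable sign because of the Dirichlet conditions, before letting $R\to\infty$.
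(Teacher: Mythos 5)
Your outline is essentially the strategy by which the \emph{bounded} case of Theorem~\ref{rn_plus} is proved in \cite{ReichelWeth} (Green-function representation on the half-space, monotonicity in $x_1$, dimension reduction to Theorem~\ref{rn} in $\R^{N-1}$). The statement at hand, however, makes no boundedness assumption, and that is exactly where your proposal has a genuine gap: everything you defer to the ``analysis at infinity'' --- the validity of $u(x)=\int_{\R^N_+}G(x,y)u(y)^q\,dy$ for an \emph{arbitrary} classical solution, the exclusion of an additive polyharmonic part, the start of the HLS-based moving-plane scheme (which needs integrability or decay that an a priori unbounded, non-decaying solution need not have), and the exclusion of $v\equiv+\infty$ in the limit --- is precisely the content that must be supplied, and ``iterated subharmonicity'' does not supply it. Concretely, $P(x)=x_1^m$ is a nontrivial, non-negative, unbounded polyharmonic function on $\R^N_+$ satisfying all $m$ Dirichlet conditions of \eqref{eq:liouville-half-space}, so positivity of the Boggio-type kernel together with the sign of $u^q$ and the boundary conditions cannot by themselves discard the polyharmonic component in the representation; in \cite{ReichelWeth} it is the boundedness of $u$ that eliminates such terms. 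Your own remark that in the blow-up application the limit solution comes with size information concedes the difficulty but does not prove the theorem as stated; likewise the Pohozaev alternative at the critical exponent needs decay to control the curved boundary terms as $R\to\infty$. (A smaller point: the moving plane on a half-space is started at \emph{small} $\lambda$, on thin strips where the smallness/HLS argument applies, and then pushed upward, not started at large $\lambda$ and decreased.)

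For comparison, the paper does not redo the half-space analysis at all: it takes the bounded case for granted (Theorem~4 of \cite{ReichelWeth}) and removes the boundedness hypothesis by the doubling lemma of Pol\'{a}\v{c}ik--Quittner--Souplet (Lemma~\ref{doubling-simplified}, \cite{PQS}). Assuming $u$ unbounded, one applies the doubling lemma to $M=u^{(q-1)/2m}$ on $X=\overline{\R^N_+}$, rescales around the doubling points, and uses the local estimates of Corollary~\ref{adn_local} to pass to a limit which is either a nontrivial bounded non-negative entire solution of \eqref{eq-rn} (excluded by Theorem~\ref{rn}) or, after checking via the gradient bound and the boundary conditions that the rescaled distances to the boundary stay bounded away from $0$, a nontrivial bounded non-negative solution of \eqref{eq:liouville-half-space} (excluded by the bounded-case theorem). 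If you wish to keep your direct route, you would in effect have to re-prove the cited bounded-case theorem \emph{and} close the representation/decay issues above, which is considerably more work than the rescaling reduction; as written, the proposal establishes (at best) the result already contained in \cite{ReichelWeth}, not its strengthening claimed here.
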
 
Here and in the following, we set $\R^N_+:= \{x \in \R^N\::\: x_1>0\}$. Theorem \ref{rn_plus} is a slight generalization of Theorem~4 in our recent paper \cite{ReichelWeth}. More precisely, it is assumed in \cite{ReichelWeth} that $u$ is bounded, but an easy argument based on the doubling lemma of Pol\'{a}\v{c}ik, Quittner and Souplet \cite{PQS} shows that this additional assumption can be removed. See Section~\ref{no-boundedness} below for details.

Theorems~\ref{rn} and~\ref{rn_plus} will also be used in the proof of Theorem~\ref{non_ex}. However, the rescaling argument is somewhat more 
involved since both $\lambda$ and the $L^\infty$-norm of the solutions need to be controlled. Here various cases have to be distinguished, and 
additional limit problems arise.

Finally we comment on some previous work related to Theorem~\ref{ex}. If $L=(-\Delta)^m$ is the polyharmonic operator, then \eqref{basic} has a variational structure. In this case existence and multiplicity results for 
solutions of \eqref{basic} have been obtained under additional assumptions on $f$ via critical point theory and related techniques, see e.g. \cite{weth,gazzola-grunau-squassina,grunau,EFJ} and the references therein. The  
approach via a priori estimates and degree theory was taken by Soranzo \cite{soranzo} and Oswald \cite{oswald}, but only in the special case where $\Omega$ is a ball. More precisely, in \cite{soranzo,oswald} the authors first prove a priori estimates for {\em radial positive solutions} before proving existence results within this class of functions. An existence result for more general operators $L$ was obtained in \cite{grunau-sweers-97} for a different class of nonlinearities which gives rise to coercive nonlinear operators. See also the references in \cite{grunau-sweers-97} for earlier results in this direction. 

The paper is organised as follows. Section~\ref{sec:non_ex} is devoted to the proof of Theorem~\ref{non_ex}, while Theorem \ref{ex} is proved in Section~\ref{sec:ex}. Finally, in Section~\ref{no-boundedness} we show how to remove the boundedness assumption which was present in the original formulation of Theorem \ref{rn_plus}.

\section{Nonexistence for the parameter dependent problem} \label{sec:non_ex}

The proof of Theorem~\ref{non_ex} uses standard $L^p$-$W^{2m,p}$ estimates for linear problems
\begin{align}
L u &= g(x) \mbox{ in } \Omega, \label{linear_eq}\\
u &=\frac{\partial}{\partial\nu}u=
\ldots=\left(\frac{\partial}{\partial\nu}\right)^{m-1} u =0 \mbox{ on } 
\partial\Omega. \label{linear_bc}
\end{align}
Recall the following basic estimate of Agmon, Douglis, Nirenberg \cite{ADN}.

\begin{theorem}[Agmon, Douglis, Nirenberg] Let 
$\Omega\subset\R^N$ be a bounded domain with $\partial\Omega\in C^{2m}, 
m\in \N$. Let $a_{ij}\in C^{2m-2}(\overline{\Omega})$, 
$b_\alpha \in L^\infty(\Omega)$, $g\in L^p(\Omega)$ for some 
$p\in (1,\infty)$. Suppose $u\in W^{2m,p}(\Omega)\cap W_0^{m,p}(\Omega)$ 
satisfies \eqref{linear_eq}. Then there exists a constant 
$C>0$ depending only on $\|a_{ij}\|_{C^{2m-2}}, \|b_\alpha\|_\infty, \lambda, 
\Omega, N, p, m$ and the modulus of continuity of $a_{ij}$ such that  
$$
\|u\|_{W^{2m,p}(\Omega)} \leq C(\|g\|_{L^p(\Omega)}+ \|u\|_{L^p(\Omega)}).
$$
\label{adn_global}
\end{theorem}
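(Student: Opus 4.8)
The plan is to prove the estimate by the classical localization and coefficient-freezing method: reduce the variable-coefficient problem on $\Omega$ to two constant-coefficient model problems — one on all of $\R^N$ governing interior points, one on the half-space $\R^N_+$ with Dirichlet conditions governing boundary points — and then patch the resulting local estimates together with a partition of unity, absorbing into the left-hand side the errors coming from the lower-order part of $L$, the oscillation of the top-order coefficients, and the flattening of $\partial\Omega$. By density it suffices to argue for smooth $u$, though in fact every step is valid directly in Sobolev spaces. The interior model problem is the easy one: fixing $x_0\in\Omega$ and letting $P_0(D)=\big(-\sum_{i,j=1}^N a_{ij}(x_0)\,\partial_i\partial_j\big)^m$ be the frozen principal part, uniform ellipticity gives $P_0(\xi)\ge\lambda^{-m}|\xi|^{2m}$, so for $|\beta|=2m$ the symbol $\xi^\beta/P_0(\xi)$ is $0$-homogeneous and $C^\infty$ on $\R^N\setminus\{0\}$, hence a Mikhlin--H\"ormander multiplier with bounds depending only on $\lambda,N,m$; the multiplier theorem then gives $\|D^{2m}v\|_{L^p(\R^N)}\le C(\lambda,N,m,p)\,\|P_0(D)v\|_{L^p(\R^N)}$ for $v\in C^\infty_c(\R^N)$.

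The boundary model problem is the step I expect to be the main obstacle. Near a boundary point a $C^{2m}$ change of variables straightens a piece of $\partial\Omega$ to $\{x_1=0\}$; after this change and after freezing the transformed top-order coefficients, one is reduced to the constant-coefficient, properly elliptic operator $P_0(D)$ on $\R^N_+$ together with the Dirichlet conditions $v=\partial_1 v=\dots=\partial_1^{m-1}v=0$ on $\{x_1=0\}$. One checks that these boundary operators satisfy the Lopatinski--Shapiro complementing condition relative to $P_0$: in the tangential Fourier variables $\xi'$, the factor of $P_0(\xi',\tau)$ in $\tau$ corresponding to roots in the upper half-plane is $(\tau-\tau^+(\xi'))^m$ for a single root $\tau^+$ with $\mathrm{Im}\,\tau^+>0$, and $1,\tau,\dots,\tau^{m-1}$ are manifestly linearly independent modulo $(\tau-\tau^+)^m$. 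Solving the resulting ODE systems in $\tau$ produces the Poisson kernels of this half-space Dirichlet problem; a Calder\'on--Zygmund argument, using that these kernels have the correct homogeneity and the required cancellation, shows that the associated solution operators are bounded on $L^p(\R^N_+)$ with constants depending only on $\lambda,N,m,p$, giving $\|D^{2m}v\|_{L^p(\R^N_+)}\le C\,\|P_0(D)v\|_{L^p(\R^N_+)}$ for $v$ supported in a half-ball and satisfying the Dirichlet conditions.

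With both model estimates in hand, cover $\overline\Omega$ by finitely many balls $B_r(x_k)$ — interior and boundary ones — with a subordinate partition of unity $\{\zeta_k\}$, the radius $r>0$ still to be fixed. For each $k$ write $L(\zeta_k u)=\zeta_k g+[L,\zeta_k]u$, replace $L$ by the frozen principal part $P_0^{(k)}(D)$, and apply the interior or boundary model estimate to $\zeta_k u$. The error $\big(L-P_0^{(k)}(D)\big)(\zeta_k u)$ together with the commutator $[L,\zeta_k]u$ splits into (i) a genuinely lower-order contribution, bounded by $C\|u\|_{W^{2m-1,p}(\Omega)}$ using $a_{ij}\in C^{2m-2}(\overline\Omega)$ and $b_\alpha\in L^\infty$, and (ii) the top-order perturbation $\big(P^{(k)}(x,D)-P_0^{(k)}(D)\big)(\zeta_k u)$, which is bounded by $C\big(\sup_{B_r(x_k)}|a_{ij}(x)-a_{ij}(x_k)|\big)\,\|D^{2m}(\zeta_k u)\|_{L^p}$. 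This is exactly where the modulus of continuity of the $a_{ij}$ enters: choosing $r$ small enough, depending only on that modulus and on $\lambda,N,m,p$ through the model constants, makes the factor in (ii) smaller than half the reciprocal of the model constant, and this term is absorbed on the left. Summing over $k$, and using that the $D^{2m}(\zeta_k u)$ control $D^{2m}u$ modulo lower-order terms, yields
\[
\|u\|_{W^{2m,p}(\Omega)}\le C\big(\|g\|_{L^p(\Omega)}+\|u\|_{W^{2m-1,p}(\Omega)}\big).
\]

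Finally, the intermediate norm is removed by interpolation: the Gagliardo--Nirenberg/Ehrling inequality on the bounded smooth domain $\Omega$ gives, for every $\eps>0$, a constant $C_\eps$ with $\|u\|_{W^{2m-1,p}(\Omega)}\le\eps\|u\|_{W^{2m,p}(\Omega)}+C_\eps\|u\|_{L^p(\Omega)}$; choosing $\eps$ small and absorbing gives the claimed bound. Tracking the dependencies confirms the stated list: the radius $r$ and hence the cardinality of the cover and the cutoff bounds depend on the modulus of continuity of $a_{ij}$ and on $\lambda,N,p,m$; the lower-order estimates use $\|a_{ij}\|_{C^{2m-2}}$ and $\|b_\alpha\|_\infty$; the flattening maps contribute $C^{2m}$ bounds for $\partial\Omega$, i.e.\ dependence on $\Omega$; and the smoothness of $a_{ij}$ beyond continuity is used only in the lower-order terms of the expanded principal part, while the top-order absorption uses nothing more than the modulus of continuity — precisely as stated.
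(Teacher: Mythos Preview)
The paper does not prove this theorem at all: it is stated without proof as the classical Agmon--Douglis--Nirenberg estimate, with a citation to \cite{ADN}. There is therefore nothing in the paper to compare your argument against.

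That said, your outline is the standard localization/coefficient-freezing proof and is essentially correct. The verification of the complementing condition is right for this operator: since the principal part is the $m$-th power of a real second-order elliptic symbol, the characteristic polynomial in the normal variable $\tau$ factors as $(\tau-\tau^+(\xi'))^m(\tau-\overline{\tau^+(\xi')})^m$, and $1,\tau,\dots,\tau^{m-1}$ are indeed linearly independent modulo $(\tau-\tau^+)^m$. One small point worth tightening: after the $C^{2m}$ boundary-flattening diffeomorphism, the transformed principal part is in general no longer an exact $m$-th power of a second-order operator, only a general properly elliptic operator of order $2m$; the complementing condition for Dirichlet data still holds in that generality (Dirichlet conditions are \emph{normal} and always satisfy it), but your one-root argument as written applies to the frozen operator at the boundary point, which after the change of variables need not have the $m$-th-power structure. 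This is easily fixed --- either note that the Jacobian at the base point can be taken orthogonal so the $m$-th-power form survives at the frozen point, or invoke the general fact that Dirichlet data always complement a properly elliptic operator --- but it is worth saying explicitly.
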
 

We will also be using the following local analogue of this result. For a standard proof see \cite{ReichelWeth}.

\begin{corollary} 
Let $\Omega$ be a ball $\{x\in \R^N: |x|<R\}$ or a half-ball 
$\{x\in \R^N: |x|<R, x_1 >0\}$. Let $m\in \N$, 
$a_{ij}\in C^{2m-2}(\overline{\Omega})$, $b_\alpha \in L^\infty(\Omega)$, 
$g\in L^p(\Omega)$ for some $p\in (1,\infty)$. Suppose 
$u\in W^{2m,p}(\Omega)$ satisfies \eqref{linear_eq} 
\begin{itemize}
\item[(i)] either on the ball 
\item[(ii)] or on the half-ball together with the boundary conditions 
$u=\frac{\partial}{\partial x_1}u=\ldots=
\frac{\partial^{m-1}}{\partial x_1^{m-1}} u =0$ on 
$\{x\in\R^N: |x|<R, x_1=0\}$.
\end{itemize}
Then there exists a constant $C>0$ depending only on 
$\|a_{ij}\|_{C^{2m-2}}, \|b_\alpha\|_\infty, \lambda, \Omega, N, p, m$, the 
modulus of continuity of $a_{ij}$ and $R$ such that for any $\sigma\in (0,1)$ 
$$
\|u\|_{W^{2m,p}(\Omega\cap B_{\sigma R})} \leq \frac{C}{(1-\sigma)^{2m}}
(\|g\|_{L^p(\Omega)}+ \|u\|_{L^p(\Omega)}).
$$
\label{adn_local}
\end{corollary}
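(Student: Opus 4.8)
The plan is to deduce this local estimate from the global one, Theorem~\ref{adn_global}, by a cut-off argument, the scaling factor $(1-\sigma)^{-2m}$ then being produced by an interpolation-and-absorption step. Throughout write $B_\rho=\{|x|<\rho\}$. Fix $\sigma\in(0,1)$, and for radii $\sigma R\le s<t<R$ pick $\eta\in C_c^\infty(B_t)$ with $\eta\equiv 1$ on $B_s$, $0\le\eta\le 1$, and $|D^\gamma\eta|\le C_\gamma(t-s)^{-|\gamma|}$ for $|\gamma|\le 2m$. Then $\eta u\in W^{2m,p}$ and $\eta u$ satisfies the Dirichlet conditions of order up to $m-1$ on the whole boundary: in case~(i) because $\eta u$ is compactly supported in $B_R$, in case~(ii) because $\eta u$ vanishes near the spherical part of the boundary while on the flat part $\{x_1=0\}$ each summand $D^\beta\eta\, D^{\alpha-\beta}u$ ($|\alpha|\le m-1$) of the Leibniz expansion of $D^\alpha(\eta u)$ vanishes, since $|\alpha-\beta|\le m-1$. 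In case~(ii) one cannot apply Theorem~\ref{adn_global} on the half-ball itself (which has a corner along the equator), but one applies it to $\eta u$, extended by zero, on an auxiliary smooth bounded domain $\Omega'$ coinciding with the half-ball on $B_{r''}$ for some $t<r''<R$, the coefficients $a_{ij},b_\alpha$ being extended to $\Omega'$ with control of their norms and of the modulus of continuity of the $a_{ij}$. Since $\eta u$ is supported in $B_t$, this substitution changes nothing except to add a dependence of the final constant on $R$.

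Next, a direct computation gives $L(\eta u)=\eta g+R_\eta u$, where $R_\eta$ is a linear differential operator of order at most $2m-1$, supported on $\supp\nabla\eta\subset B_t\setminus B_s$, whose coefficients are built from the $a_{ij}$, the $b_\alpha$ and derivatives of $\eta$; consequently $|R_\eta u|\le C\sum_{j=0}^{2m-1}(t-s)^{-(2m-j)}|D^j u|$ pointwise, with $C$ depending only on $\|a_{ij}\|_{C^{2m-2}}$, $\|b_\alpha\|_\infty$, $\lambda$, $N$, $p$, $m$ and the modulus of continuity of the $a_{ij}$. Applying Theorem~\ref{adn_global} to $\eta u$ and using $\eta\equiv1$ on $B_s$ we obtain
\[
\|u\|_{W^{2m,p}(B_s\cap\Omega)}\le C\Big(\|g\|_{L^p(\Omega)}+\|u\|_{L^p(\Omega)}+\sum_{j=0}^{2m-1}(t-s)^{-(2m-j)}\|D^j u\|_{L^p(B_t\cap\Omega)}\Big).
\]

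To remove the intermediate derivatives we use the scale-invariant interpolation inequality $\|D^j u\|_{L^p(B_t\cap\Omega)}\le C\big(\delta^{2m-j}\|D^{2m}u\|_{L^p(B_t\cap\Omega)}+\delta^{-j}\|u\|_{L^p(B_t\cap\Omega)}\big)$, valid for $0<\delta\le t$ with a constant $C=C(N,p,m)$ independent of $t$ and $\delta$. Taking $\delta=\theta(t-s)$ with $\theta\in(0,1)$ to be chosen, the previous estimate becomes
\[
\|u\|_{W^{2m,p}(B_s\cap\Omega)}\le C\big(\|g\|_{L^p(\Omega)}+\|u\|_{L^p(\Omega)}\big)+C\theta\,\|u\|_{W^{2m,p}(B_t\cap\Omega)}+C_\theta(t-s)^{-2m}\|u\|_{L^p(\Omega)}.
\]
Now set $M:=\sup_{\sigma R\le s<R}(R-s)^{2m}\|u\|_{W^{2m,p}(B_s\cap\Omega)}$, which is finite since $u\in W^{2m,p}(\Omega)$. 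Applying the last inequality with $t=\tfrac{s+R}{2}$, so that $t-s=R-t=\tfrac{R-s}{2}$, and multiplying through by $(R-s)^{2m}$; since then $(R-s)^{2m}=2^{2m}(R-t)^{2m}$, the middle term is bounded by $C\theta 2^{2m}M$ and the last by $2^{2m}C_\theta\|u\|_{L^p(\Omega)}$. Taking the supremum over $s$ and fixing $\theta$ so that $C\theta 2^{2m}\le\tfrac12$ gives $M\le\tfrac12 M+C(R)\big(\|g\|_{L^p(\Omega)}+\|u\|_{L^p(\Omega)}\big)$, hence $M\le 2C(R)\big(\|g\|_{L^p(\Omega)}+\|u\|_{L^p(\Omega)}\big)$. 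Evaluating the definition of $M$ at $s=\sigma R$ and using $(R-\sigma R)^{-2m}=R^{-2m}(1-\sigma)^{-2m}$ yields the claimed inequality.

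The main obstacle is the bookkeeping in the commutator step: since the principal part of $L$ is the $m$-fold composition $\big(-\sum_{i,j}a_{ij}\partial_i\partial_j\big)^m$ together with lower-order terms $b_\alpha D^\alpha$ of every order up to $2m-1$, one must verify carefully that the $j$-th order part of $R_\eta$ indeed has coefficients of size $(t-s)^{-(2m-j)}$, with constants controlled only by the quantities listed above. The secondary technical point is the reduction to a smooth domain in case~(ii); both are routine but must be carried out so that the resulting constant depends only on $\|a_{ij}\|_{C^{2m-2}}$, $\|b_\alpha\|_\infty$, $\lambda$, $\Omega$, $N$, $p$, $m$, the modulus of continuity of the $a_{ij}$ and $R$.
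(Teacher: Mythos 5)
Your argument is correct and is exactly the ``standard proof'' the paper points to in \cite{ReichelWeth}: localize with a cut-off, apply the global Agmon--Douglis--Nirenberg estimate of Theorem~\ref{adn_global} to $\eta u$ (on a smoothed domain in the half-ball case), control the commutator terms by scale-invariant interpolation, and absorb via the weighted supremum over nested radii, which produces the $(1-\sigma)^{-2m}$ factor. No substantive gaps; the points you flag (the Leibniz check of the flat-part boundary conditions and the extension of the coefficients/domain) are indeed the only technicalities and are handled correctly.
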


It is sometimes convenient to rewrite the operator $L$ in the form 
$$
L=(-1)^m \sum_{|\alpha|=2m} a_\alpha(x)D^\alpha + \sum_{0\leq |\alpha|\leq 2m-1} c_\alpha(x)D^\alpha.
$$
Here $a_\alpha(x)= \sum \limits_{I \in \cM_\alpha}a_{i_1 i_2}(x)\cdot 
a_{i_3 i_4}(x)\cdots a_{i_{2m-1} i_{2m}}(x)$, where $\cM_\alpha$ is the set of all vectors 
$I=(i_1,\dots,i_{2m}) \in \{1,\dots,N\}^{2m}$ satisfying $\#\{j\::\:i_j=l\}= \alpha_l$ for $l=1,\dots,N$. Note that $a_\alpha, c_\alpha$ are uniformly $\alpha$-H\"older continuous in $\Omega$. 

\medskip

Finally, the following lemma is used a number of times in the subsequent proof of Theorem~\ref{non_ex}. A version of part (a) of the lemma already appeared in Reichel, Weth \cite{ReichelWeth} and similar arguments have been used by Wei and Xu in \cite{WeiXu}.

\begin{lemma} (a) Let $v$ be a strong $W^{2m,1}_{loc}(\R^N)\cap C^{2m-1}(\R^N)$ solution of $(-\Delta)^m v \geq g(v)$ in $\R^N$ such that $D^\alpha v$ is bounded for all multi-indices $\alpha$ with $0\leq \alpha\leq 2m-1$. If $g:\R\to [0,\infty)$ is convex and non-negative with $g(s)>0$ for $s<0$ then either $v>0$ or $v\equiv 0$. \\
(b) Let $v$ be a strong $W^{2m,1}_{loc}(\R^N_+)\cap C^{2m-1}(\R^N_+)$ solution of $(-\Delta)^m v \geq 1$ in $\R^N_+$. Then $(-\Delta)^{m-1} v$ is unbounded.
\label{Liou}
\end{lemma}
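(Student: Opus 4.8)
The plan is to reduce both parts to second-order comparison arguments by removing one factor of $-\Delta$. For part (a) we pass to spherical averages and exploit the convexity of $g$ through Jensen's inequality; for part (b) we compare a suitable second-order quantity with an explicit quadratic on arbitrarily large balls.

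\emph{Part (a).} The heart of the matter is to show $v\ge 0$ on $\R^N$. Fix $x_0\in\R^N$ and let $\bar v(r)$ denote the mean value of $v$ over the sphere $\partial B_r(x_0)$. Averaging the inequality $(-\Delta)^m v\ge g(v)$ over these spheres and using that radialisation commutes with the Laplacian, together with Jensen's inequality (here the convexity and nonnegativity of $g$ enter), gives $(-\Delta_r)^m\bar v(r)\ge g(\bar v(r))$, where $\Delta_r=\partial_r^2+\frac{N-1}{r}\partial_r$; moreover $\bar v$ and its radial derivatives up to order $2m-1$ are bounded because the corresponding derivatives of $v$ are bounded. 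Now suppose $\bar v(0)=v(x_0)<0$. Since $g>0$ on $(-\infty,0)$, on the range taken by $\bar v$ over the set $\{\bar v<0\}$ (a compact subset of $(-\infty,0)$, as $\bar v$ is bounded) we have $g(\bar v)\ge\mu$ for some $\mu>0$, hence $(-\Delta_r)^m\bar v\ge\mu$ there. Integrating this differential inequality $m$ times, the weight $r^{N-1}$ forces each integration to drive $\bar v$ further down while the constants of integration are controlled by the bounded radial derivatives of $\bar v$ at $r=0$; one concludes that $\{\bar v<0\}$ is all of $[0,\infty)$ and that $\bar v(r)\to-\infty$, contradicting boundedness. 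Therefore $v(x_0)\ge 0$, and since $x_0$ was arbitrary, $v\ge 0$. For the dichotomy, assume $v(x_0)=0$ for some $x_0$; then $\bar v\ge 0$ with an interior minimum at $r=0$, and applying the strong minimum principle to the second-order inequality $-\Delta_r\big((-\Delta_r)^{m-1}\bar v\big)\ge g(\bar v)\ge 0$ (after checking that the iterated averages have the required sign and regularity) yields $\bar v\equiv 0$, i.e. $v\equiv 0$. If instead $v$ never vanishes, then $v>0$.

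\emph{Part (b).} Set $z:=(-\Delta)^{m-1}v$, so that $-\Delta z=(-\Delta)^m v\ge 1$ in $\R^N_+$. Suppose, for contradiction, that $z$ is bounded, say $|z|\le M$ on $\R^N_+$. For $t>0$ the open ball $B_t(te_1)$, with $e_1=(1,0,\dots,0)$, is contained in $\R^N_+$, and $\varphi_t(x):=\frac{t^2-|x-te_1|^2}{2N}$ satisfies $-\Delta\varphi_t=1$ in $B_t(te_1)$ and $\varphi_t=0$ on $\partial B_t(te_1)$. Hence $z-\varphi_t$ is superharmonic in $B_t(te_1)$ and equals $z\ge -M$ on $\partial B_t(te_1)$; the minimum principle evaluated at the centre gives $z(te_1)-\frac{t^2}{2N}\ge -M$, i.e. $z(te_1)\ge\frac{t^2}{2N}-M$. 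Letting $t\to\infty$ contradicts $|z|\le M$, so $(-\Delta)^{m-1}v$ is unbounded.

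The main obstacle is the ODE analysis in part (a): one must integrate $(-\Delta_r)^m\bar v\ge\mu$ while keeping track of all $m$ constants of integration and verifying that the set $\{\bar v<0\}$ cannot be bounded, so that the inequality remains in force up to $r=\infty$ and indeed pushes $\bar v$ to $-\infty$. A secondary, routine point shared by both parts is that $w=(-\Delta)^{m-1}v$, $z$ and $\bar v$ have only $W^{2,1}_{loc}$-regularity, so the mean-value inequalities, the comparison with $\varphi_t$, and the minimum principle must be applied to continuous representatives using the distributional Laplacian, which is legitimate here.
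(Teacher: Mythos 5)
Your part (b) is correct and essentially equivalent to the paper's argument: the paper differentiates the spherical mean of $w=(-\Delta)^{m-1}v$ about a point $X\in\R^N_+$ to get $\bar w(r)\le\bar w(0)-r^2/(2N)$ for $r<X_1$, whereas you compare $z=(-\Delta)^{m-1}v$ with the explicit quadratic $\varphi_t$ on $B_t(te_1)$; both yield the same quadratic lower bound at the centre and the same contradiction. (Only make sure the minimum principle you invoke is valid for $W^{2,1}_{loc}\cap C^1$ superharmonic functions; the mean-value computation via the divergence theorem, as in the paper, is the safe way to justify it.)

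Part (a), however, has a genuine gap at the decisive step. You reduce to the radial inequality $(-\Delta_r)^m\bar v\ge g(\bar v)$ and then assert that "integrating $m$ times" shows $\{\bar v<0\}=[0,\infty)$ and $\bar v\to-\infty$. This does not follow as stated: after one integration of $-\Delta_r w_{m-1}\ge g(\bar v)\ge 0$ you learn that $w_{m-1}=(-\Delta_r)^{m-1}\bar v$ is non-increasing, but to integrate again you need the \emph{sign} of $w_{m-1}$, not just the sign of $(-\Delta_r)^m\bar v$; without it the inequalities do not propagate, and in particular you cannot conclude that $\bar v$ itself is monotone, so $\{\bar v<0\}$ could a priori be a bounded interval on which $g(\bar v)\ge\mu$ gives you nothing at infinity. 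The missing ingredient --- and the actual content of the paper's proof --- is a downward induction showing $(-\Delta)^l v\ge 0$ for $l=1,\dots,m-1$: assuming $v_j\ge 0$ for $j>l$ and $v_l(x_0)<0$, one gets $\bar v_l(r)\le\bar v_l(0)<0$ (since $\bar v_l'\le 0$), and then integrating $\Delta\bar v_{l-1}=-\bar v_l\ge-\bar v_l(0)>0$ forces $\bar v_{l-1}'(r)\ge -\tfrac{r}{N}\bar v_l(0)\to\infty$, contradicting the boundedness of $D^\alpha v$ for $|\alpha|\le 2m-1$. Only with $v_1=-\Delta v\ge 0$ in hand does $\bar v$ become non-increasing, so that $\alpha=\lim_{r\to\infty}\bar v(r)<0$ and $g(\bar v(r))\ge\tfrac12 g(\alpha)>0$ for large $r$, after which a single further integration of $\bar v_{m-1}'$ gives the contradiction. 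Your dichotomy argument is also off: the strong minimum principle applied to $-\Delta_r\bigl((-\Delta_r)^{m-1}\bar v\bigr)\ge 0$ gives information about $(-\Delta_r)^{m-1}\bar v$, not about $\bar v$; the correct route is again $-\Delta v\ge 0$, i.e.\ $v$ is superharmonic and nonnegative, whence $v>0$ or $v\equiv 0$. So the induction establishing the nonnegativity of the intermediate iterated Laplacians is not a technicality you can defer --- it is needed both for the integration scheme and for the final alternative, and it must be supplied.
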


\begin{proof} {\em Part (a):} Let $v_l := (-\Delta)^l v$ for $l=1,\ldots, m-1$ and set $v_0=v$. Then we have 
$$
-\Delta v_0 = v_1, \quad -\Delta v_1 = v_2, \quad \ldots \quad -\Delta v_{m-1}\geq g(v_0) \mbox{ in } 
\R^N. 
$$
First we show that $v_l \geq 0$ in $\R^N$ for $l=1,\ldots,m-1$. Assume that there exists $l\in \{1,\ldots,m-1\}$ and 
$x_0\in \R^N$ with $v_l(x_0)<0$ but $v_j\geq 0$ in $\R^N$ for $j=l+1,\ldots,m$. 
We may assume w.l.o.g. that $x_0=0$. If we define for a function $w\in W_{loc}^{2,1}(\R^N)\cap C^1(\R^N)$ spherical averages $\bar w(x) = \frac{1}{r^{N-1}\omega_N} \oint_{\partial B_r(0)} w(y) 
\,d\sigma_y$, $r=|x|$ then the radial functions $\bar v_0, \bar v_1,\ldots,\bar v_{m-1}$ satisfy 
$$
-\Delta \bar v_0 = \bar v_1, \quad -\Delta \bar v_1 = \bar v_2, \quad \ldots \quad 
-\Delta \bar v_{m-1} \geq  g(\bar v_0) \mbox{ in } \R^N,
$$
where we have used Jensen's inequality and the convexity of $g$. Since $v_l(0)<0$ we also have $\bar v_l(0)<0$. Moreover
\begin{align}
\bar v_l'(r) =& \frac{1}{\omega_N}\oint_{\partial B_1(0)} (\nabla v_l)(r\xi)\cdot\xi\,d\sigma_\xi\label{sys}\\
 =& \frac{1}{\omega_N}\int_{B_1(0)} (\Delta v_l)(r\xi)r\,d\xi
\;\left\{
\begin{array}{ll}
= \displaystyle\frac{-1}{\omega_N}\int_{B_1(0)} v_{l+1}(r\xi)r\,d\xi & \mbox{ if } l<m-1, \vspace{\jot}\\
\leq \displaystyle\frac{-1}{\omega_N}\int_{B_1(0)} g(v(r\xi))r\,d\xi & \mbox{ if } l=m-1.
\end{array}
\right. \nonumber
\end{align}
%
Since the right-hand side is non-positive in both cases we obtain $\bar v_l(r)\leq \bar v_l(0)<0$. Integrating the inequality
$$
\Delta \bar v_{l-1}= -\bar v_l\geq -\bar v_l(0)>0
$$
we obtain $r^{N-1} \bar v_{l-1}'(r)\geq -\frac{r^N}{N} \bar v_l(0)$, i.e, $\bar v_{l-1}'(r) \geq 
-\frac{r}{N} \bar v_l(0)$. The unboundedness of $\bar v_{l-1}'$ yields a contradiction. 

\smallskip

Next we show that $v=v_0\geq 0$. Assume that $v_0(x_0)<0$ and w.l.o.g. $x_0=0$. Since 
$\Delta \bar v_0=-\bar v_1\leq 0$ we see that $\bar v_0'(r)\leq 0$ and we define  
$\alpha:= \lim_{r\to\infty} \bar v_0(r)<0$. Thus $g(\bar v_0(r))\geq \frac{1}{2}g(\alpha)>0$ for $r\geq r_0$. As in \eqref{sys} we find
\begin{align*}
\bar v_{m-1}'(r) \leq & \frac{-1}{\omega_N}\int_{B_1(0)} g(v_0(r\xi))r\,d\xi = \frac{-1}{r^{N-1}\omega_N}\int_{B_r(0)} g(v_0(\eta))\,d\eta \\
= & \frac{-1}{r^{N-1}\omega_N}\int_0^r \oint_{B_s(0)} g(v_0(\eta))\,d\sigma_\eta\,ds \leq -\int_0^r \frac{s^{N-1}}{r^{N-1}} g(\bar v_0(s))\,ds \\
\leq & -\int_{r/2}^r \frac{s^{N-1}}{r^{N-1}} \frac{g(\alpha)}{2}\,ds 
\end{align*}
if $r\geq 2r_0$. Since the last term converges to $-\infty$ as $r\to \infty$ we obtain a contradiction to the boundedness of $\bar v_{m-1}'$. Finally the alternative $v>0$ or $v\equiv 0$ follows since $-\Delta v\geq 0$ by the first part of the proof.

\medskip

\noindent
{\em Part (b):}
Let $w := (-\Delta)^{m-1} v$ so that $w$ is a strong $W^{2,1}_{loc}(\R^N_+)\cap C^1(\R^N_+)$ solution of $-\Delta w \geq 1$. Let 
$$
\bar w (r;X) := \frac{1}{r^{N-1} \omega_N}\oint_{\partial B_r(X)} w(y)\,d\sigma_y = \frac{1}{\omega_N} \oint_{\partial B_1(0)} w(X+r\xi)\,d\sigma_\xi
$$
for $X\in \R^N_+$ and $0<r<X_1$. For fixed $X\in \R^N_+$ the function $\bar w$ satisfies 
$$
\bar w'(r) = \frac{1}{\omega_N}\oint_{\partial B_1(0)} (\nabla w)(X+r\xi)\cdot \xi\,d\sigma_\xi = \frac{1}{\omega_N} \int_{B_1(0)} (\Delta w)(X+r\xi)r\,d\xi \leq \frac{-r}{N}
$$
for $0<r<X_1$. Hence, $\bar w(r) \leq \bar w(0)- \frac{r^2}{2N}$. Letting $X_1$ and $r$ tend to infinity we find that $w$ cannot stay bounded.
\end{proof}

We now have all the tools to complete the\\[0.2cm]
\noindent
{\em Proof of Theorem \ref{non_ex}.} Assume for contradiction that there exists a sequence of pairs $(u_k,\lambda_k)$ of solutions of \eqref{basic_lambda} with $\lambda_k\to\infty$ for $k\to\infty$. Let $M_k := \|u_k\|_\infty$. By considering a suitable subsequence we can assume that there exists $x_k\in \Omega$ such that either $M_k=u_k(x_k)$ for all $k\in\N$ or 
$M_k = -u_k(x_k)$ for all $k\in \N$. 

\smallskip

\noindent
\underline{Case 1:} $\|u_k\|_\infty$ stays bounded. W.l.o.g. we can assume $0\in\Omega$ and $B_\delta(0)\subset\Omega$ for some $\delta>0$. Set $v_k(x) := u_k(\lambda_k^{-1/2m} x)$. Then $v_k$ satisfies
$$
\bar L^k v_k(x) = \frac{1}{\lambda_k} f(\lambda_k^{-1/2m}x,v_k)+1 \mbox{ in } B_{\lambda_k^{1/2m}\delta}(0)
$$
where 
$$
\bar L^k :=(-1)^m \sum_{|\alpha|=2m} \bar a_\alpha^k(\lambda_k^{-1/2m}x) D^\alpha
+\sum_{0\leq |\alpha|\leq 2m-1} \lambda_k^{\frac{|\alpha|}{2m}-1}\bar c_\alpha^k(\lambda_k^{-1/2m}x) D^\alpha.
$$
By standard interior regularity on the ball $B_R(0)$ for any $R>0$ and any $p\geq 1$ there exists a constant $C_{p,R}>0$ such that 
$$
\|v_k\|_{W^{2m,p}(B_R(0))} \leq C_{p,R} \mbox{ uniformly in } k.
$$
For $p$ sufficiently large and by passing to a subsequence (again denoted $v_k$) we see that 
$v_k \to v$ in $C^{2m-1,\alpha}_{loc}(\R^N)$ and in $W^{m,p}_{loc}(\R^N)$ as $k\to \infty$ for every 
$R>0$, where $v\in C^{2m-1,\alpha}_{loc}(\R^N)\cap W^{m,p}_{loc}(\R^N)$ is a bounded weak (and hence classical) solution of 
$$
{\mathcal L} v= 1 \mbox{ in } \R^N, \quad \mbox{ where} \quad 
{\mathcal L} = (-1)^m \sum_{|\alpha|=2m} a_\alpha(0) D^\alpha
= \Bigl(-\sum_{i,j=1}^N a_{ij}(0)\frac{\partial^2}{\partial x_i\partial x_j} \Bigr)^m. 
$$
By a linear change of variables we may assume that $v$ is a bounded, classical, entire solution of 
$(-\Delta)^m v = 1$ in $\R^N$. Lemma \ref{Liou}(b) shows that we have reached a contradiction.

\noindent
\underline{Case 2:} $\|u_k\|_\infty$ is unbounded. For this case we
need to discuss various sub-cases depending on the growth of the numbers
$$
\rho_k:=M_k^\frac{q-1}{2m}\dist(x_k,\partial\Omega), \qquad k \in \N.
$$ 
Passing to a subsequence, we may assume that either $\rho_k \to
\infty$ or $\rho_k \to \rho \ge 0$ as $k \to \infty$. 
\noindent \underline{Case 2.1:} $\rho_k \to\infty$ as $k\to \infty$. Again we need to distinguish two further possibilities. Let $\tilde\lambda_k := \lambda_k/M_k^q$.

\smallskip

\noindent
\underline{Case 2.1.a:} $\tilde\lambda_k$ is bounded, i.e., up to selecting a subsequence, $\tilde\lambda_k\to \lambda^\ast\geq 0$. Then we set $v_k(y) := \frac{1}{M_k}u_k(M_k^\frac{1-q}{2m}y+x_k)$ so that $\|v_k\|_\infty=1$ and either $v_k(0)=1$ for all $k\in\N$ (positive blow-up) or $v_k(0)=-1$ for all $k\in\N$ (negative blow-up). Moreover we can assume that $x_k\to \bar x \in \overline{\Omega}$. The functions $v_k$ are well-defined on the sequence of balls $B_{\rho_k}(0)$ as $k\to \infty$ and they satisfy 
$$
\bar L^k v_k(y) = \frac{1}{M_k^q} \Bigl( \underbrace{f(M_k^\frac{1-q}{2m}y+x_k,M_kv_k(y))}_{=:f_k(y)}+\lambda_k\Bigr)\qquad \text{for $y\in B_{\rho_k}(0)$,}
$$
where this time
$$
\bar L^k :=(-1)^m \sum_{|\alpha|=2m} \bar a_\alpha^k(y) D^\alpha
+\sum_{0\leq |\alpha|\leq 2m-1} \bar c_\alpha^k(y) D^\alpha
$$
and 
$$
\bar a_\alpha^k(y) := a_\alpha(M_k^\frac{1-q}{2m} y+x_k), \quad 
\bar c_\alpha^k(y) := M_k^{(q-1)(\frac{|\alpha|}{2m}-1)} 
c_\alpha(M_k^\frac{1-q}{2m} y+x_k).
$$
By our assumption (H1) on the nonlinearity $f(x,s)$ we have that $\|f_k\|_{L^\infty(B_{\rho_k}(0))}$ 
is bounded in $k$. Note that the ellipticity constant, the $L^\infty$-norm of the coefficients of $\bar L^k$ and the moduli of continuity of $\bar a_\alpha^k$ are not larger then the one for the operator $L$. By applying Corollary~\ref{adn_local} on the ball $B_R(0)$ for any $R>0$ and any $p\geq 1$ there exists a constant $C_{p,R}>0$ 
such that 
$$
\|v_k\|_{W^{2m,p}(B_R(0))} \leq C_{p,R} \mbox{ uniformly in } k.
$$
For large enough $p$ we may extract a subsequence (again denoted $v_k$) such 
that $v_k \to v$ in $C^{2m-1,\alpha}(B_R(0))$ as $k\to \infty$ for every 
$R>0$, where $v\in C^{2m-1,\alpha}_{loc}(\R^N)$ is bounded with 
$\|v\|_\infty=1=\pm v(0)$. Taking yet another subsequence we may assume that 
$f_k\overset{\ast}{\rightharpoonup} F$ in $L^\infty(K)$ as $k\to \infty$ for every compact set 
$K\subset \R^N$. Also we see that 
\begin{equation}
F(y) = \left\{\begin{array}{ll}
h(\bar x)v(y)^q & \mbox{ if } v(y)>0,\vspace{\jot}\\
k(\bar x)|v(y)|^q & \mbox{ if } v(y)<0,
\end{array} \right.
\label{def_F}
\end{equation}
because, e.g., if $v(y)>0$ then there exists $k_0$ such that $v_k(y)>0$ for $k\geq k_0$ and hence 
$M_kv_k(y)\to \infty$ as $k\to \infty$. Therefore (H1) implies that 
$f_k(y)\to h(\bar x) v(y)^q$ as $k\to \infty$, and a similar pointwise convergence holds at 
points where $v(y)<0$.  Finally, note that the pointwise convergence of 
$f_k$ on the set $Z^+=\{y\in \R^N: v(y)>0\}$ and $Z^-=\{y\in \R^N:v(y)<0\}$ determine due to 
the dominated convergence theorem the 
weak$\ast$-limit $F$ of $f_k$ on the set $Z^+\cup Z^-$. Since $\bar c_\alpha^k(y)\to 0$ and 
$\bar a^k_\alpha(y)\to a_\alpha(\bar x)$ as $k\to\infty$ and since, for any fixed $p\in (1,\infty)$, we may assume that 
$v_k\to v$ in $W^{2m-1,p}_{loc}(\R^N)$ we find that $v$ is a bounded, 
weak $W^{m,p}_{loc}(\R^N)$-solution of 
\begin{equation} 
{\mathcal L} v= F+\lambda^\ast \mbox{ in } \R^N, \quad \mbox{ where} \quad 
{\mathcal L} = (-1)^m \sum_{|\alpha|=2m} a_\alpha(\bar x) D^\alpha
= \Bigl(-\sum_{i,j=1}^N a_{ij}(\bar x)\frac{\partial^2}{\partial y_i\partial y_j} \Bigr)^m. 
\label{eq:whole_space}
\end{equation}
Since $F\in L^\infty(\R^N)$ we get that 
$v\in W^{2m,p}_{loc}(\R^N)\cap C^{2m-1,\alpha}_{loc}(\R^N)$ is a bounded, 
strong solution of \eqref{eq:whole_space}. Because $D^{2m} v = 0 $ a.e. on the set 
$\{y\in \R^N: v(y)=0\}$ one finds that $v$ is a strong solution of 
\begin{equation}
{\mathcal L} v=
\left\{\begin{array}{ll}
h(\bar x)v(y)^q+\lambda^\ast & \mbox{ if } v(y)>0,\vspace{\jot}\\
0 & \mbox{ if } v(y) =0, \vspace{\jot}\\
k(\bar x)|v(y)|^q+\lambda^\ast & \mbox{ if } v(y)<0
\end{array}\right.
\label{right_hand_side_whole_space}
\end{equation}
in $\R^N$. Note that the right-hand side of \eqref{right_hand_side_whole_space} is larger or equal to $g(v)$, where the function $g$ is defined by
\begin{equation}
g(s):= \left\{\begin{array}{ll}
h(\bar x)s^q & \mbox{ if } s\geq 0,\vspace{\jot}\\
k(\bar x)|s|^q & \mbox{ if } s \leq 0.
\end{array} \right.
\label{def_g}
\end{equation}
Since the function $g$ is convex we can apply Lemma~\ref{Liou}(a) and obtain $v>0$. Thus $v$ is a classical $C^{2m,\alpha}_{loc}(\R^N)$ solution, and by a linear change of variables we may assume that $v$ solves 
$$
(-\Delta)^m v = h(\bar x)v^q+\lambda^\ast \mbox{ in } \R^N, \qquad v(0)=1.
$$
Clearly $v$ and all its derivatives of order $\leq 2m$ are bounded. If $\lambda^\ast=0$ then Theorem~\ref{rn} tells us that this is impossible. And if $\lambda^\ast>0$ then Lemma ~\ref{Liou}(b) provides a contradiction. This finishes the proof in this case.

\smallskip

\noindent
\underline{Case 2.1.b:} $\tilde\lambda_k$ is unbounded, i.e., up to a subsequence $\tilde\lambda_k\to \infty$. Now we set $v_k(y) := \frac{1}{M_k}u_k(M_k^\frac{1-q}{2m}\tilde\lambda_k^{-1/2m}y+x_k)$. The functions $v_k$ are again well defined on a sequence of expanding balls and satisfy
\begin{equation}
  \label{eq:new-eq-2.2-b}
\bar L^k v_k(y) = \frac{1}{\tilde\lambda_kM_k^q} \Bigl( \underbrace{f(M_k^\frac{1-q}{2m}\tilde\lambda_k^{-1/2m}y+x_k,M_kv_k(y))}_{=:f_k(y)}
+\lambda_k\Bigr), 
\end{equation}
where this time
$$
\bar L^k :=(-1)^m \sum_{|\alpha|=2m} \bar a_\alpha^k(y) D^\alpha
+\sum_{0\leq |\alpha|\leq 2m-1} \bar c_\alpha^k(y) D^\alpha
$$
with
$$
\bar a_\alpha^k(y) := a_\alpha(M_k^\frac{1-q}{2m}\tilde\lambda_k^{-1/2m}y+x_k), \quad 
\bar c_\alpha^k(y) := M_k^{(q-1)(\frac{|\alpha|}{2m}-1)} \tilde\lambda_k^{\frac{|\alpha|}{2m}-1}
c_\alpha(M_k^\frac{1-q}{2m}\tilde\lambda_k^{-1/2m}y+x_k).
$$
Arguing like before we arrive at the situation that $v_k\to v$ in
$W^{2m-1,p}_{loc}(\R^N)$ as $k\to \infty$, where, modulo a linear
change of variables, $v\in C^{2m-1,\alpha}_{loc}(\R^N)\cap W^{2m,p}_{loc}(\R^N)$ is a bounded strong (and hence classical) solution of $(-\Delta)^m v = 1$ in $\R^N$. A contradiction is reached via Lemma~\ref{Liou}(b).

\smallskip

\noindent
\underline{Case 2.2:}
$\rho_k \to\rho\geq 0$. Then, modulo a subsequence, $x_k\to \bar x\in\partial\Omega$ as $k\to \infty$, and after translation we may assume that $\bar x= 0$. By flattening the boundary through a local change of coordinates we may assume that near $\bar x=0$ the boundary is contained in the hyperplane $x_1=0$, and that $x_1>0$ corresponds to points inside $\Omega$. Since $\partial\Omega$ is locally a $C^{2m,\alpha}$-manifold, this change of coordinates transforms the operator $L$ into a similar operator which satisfies the same hypotheses as $L$. For simplicity we 
call the transformed variables $x$, the transformed domain $\Omega$
and the transformed operator $L$. Note that
$\dist(x_k,\partial\Omega)=x_{k,1}$ for sufficiently large $k$. By
passing to a subsequence we
may assume that this is true for every $k$, so that $\rho_k=M_k^\frac{q-1}{2m}x_{k,1}$.  As before we need to distinguish two further possibilities by defining $\tilde\lambda_k := \lambda_k/M_k^q$.

\smallskip

\noindent
\underline{Case 2.2.a:} Up to selecting a subsequence assume that
$\tilde\lambda_k\to \lambda^\ast\geq 0$. In this case we define the
function $v_k$, the coefficients $\bar a_\alpha^k, \bar c_\alpha^k$
and the operator $\bar L^k$ as in Case~2.1.a, where $v_k$ is now
defined on the set $\{y \in \R^N\::\:M_k^\frac{1-q}{2m}y+x_k \in
\Omega\}$ which contains $B_{\rho_k}(0)$. Then we make another change of coordinates, defining 
\begin{align*}
w_k(z) &:= v_k(z-\rho_k e_1),\\ 
\tilde a_\alpha^k(z) &:= \bar a_\alpha^k(z-\rho_k e_1),\\
\tilde c_\alpha^k(z) &:= \bar c_\alpha^k(z-\rho_k e_1),
\end{align*}
where $e_1=(1,0,\dots,0) \in \R^N$ is the first coordinate vector, and likewise the
operator $\tilde L^k$. Note that $w_k$, $\tilde a_\alpha^k$, $\tilde c_\alpha^k$ 
and the operator $\tilde L^k$ are defined on the set 
$$
\Omega_k:=\{z \in
\R^N\::\:M_k^\frac{1-q}{2m}z +(0,x_{k,2},\dots,x_{k,N}) \in \Omega\},
$$
and that $w_k(\rho_k e_1)=\pm 1$. We now fix $R>0$ and let $B_R^+ = B_R(0)\cap \R^N_+$. By our assumptions on the boundary $\partial \Omega$ near $\bar x$,
we have $B_R^+ \subset \Omega_k$ for sufficiently large $k$. Moreover,
$w_k$ satisfies 
$$
\tilde L^k w_k(z)=\tilde f_k(z)+\tilde\lambda_k \mbox{ in } B_R^+, \quad \mbox{ where } 
\tilde f_k(z):= \frac{1}{M_k^q} f(M_k^\frac{1-q}{2m}z+(0,x_{k,2},\ldots,x_{k,n}),M_k w_k(z)),
$$
together with Dirichlet-boundary conditions on $\{z\in \R^N:|z|<R, z_1=0\}$. Hence we may 
apply Corollary~\ref{adn_local} on the half-ball 
$B_R^+$ and find that for any $p\geq 1$ there exists a constant $C_{p,R}>0$ 
such that 
$$
\|w_k\|_{W^{2m,p}(B_R^+)} \leq C_{p,R} \mbox{ uniformly in } k.
$$
By the Sobolev embedding theorem, this implies that $\nabla v_k$ is
bounded on $B_R^+$ independently of $k$, and since 
$$
1 = |\underbrace{v_k(0)}_{=\pm 1}-\underbrace{v_k(\rho_k,0,\ldots,0)}_{=0}| 
\leq \rho_k \|\nabla v_k\|_\infty,
$$
we see that $\rho=\lim \limits_{k \to \infty}\rho_k>0$. As in Case~2.1.a we can now extract convergent subsequences $w_k\to w$ in 
$C^{2m-1,\alpha}_{loc}(\overline{\R^N_+})$ and 
$f_k\overset{\ast}{\rightharpoonup} F$ in $L^\infty(\R^N_+)$ as $k\to \infty$, 
where $F\geq 0, \not \equiv 0$ is determined in the same way as in Case~2.1.a. This time, $w$ is a bounded, strong $W^{2m,p}_{loc}(\R^N_+)\cap 
C^{2m-1,\alpha}_{loc}(\overline{\R^N_+})$-solution of 
$$
{\mathcal L} w = F+\lambda^\ast \mbox{ in } \R^N_+, \qquad 
\frac{\partial}{\partial z_1}w=\ldots=
\frac{\partial^{m-1}}{\partial z_1^{m-1}} w =0 \mbox{ on }\partial \R^N_+
$$
with ${\mathcal L}$ as in \eqref{eq:whole_space}. 
By a linear change of variables we may assume that $w$ solves 
\begin{equation}
(-\Delta)^m w = g(w)+\lambda^\ast \mbox{ in } \R^N_+, \qquad \frac{\partial}{\partial z_1}w=\ldots=
\frac{\partial^{m-1}}{\partial z_1^{m-1}} w =0 \mbox{ on }\partial \R^N_+,
\label{eq_halfspace}
\end{equation}
where $g$ is defined as in \eqref{def_g} of Case~2.1.a. The
representation formula of Theorem 9 
in \cite{ReichelWeth} applies and shows that $w$ is nonnegative, so
that $g(w(z))=h(\bar x)w(z)^q$. Moreover,  
$$
w(\rho e_1)=\lim_{k \to \infty} w_k(\rho_k e_1)=1,
$$
so that $w$ is a positive, bounded and classical solution
$C^{2m}$-solution of $(-\Delta)^m w= h(\bar x) w^q+\lambda^\ast$ in
$\R^N_+$ with Dirichlet boundary conditions on $\partial \R^N_+$.  A contradiction is reached by either Theorem~\ref{rn_plus} if $\lambda^\ast=0$ or Lemma~\ref{Liou}(b) if $\lambda^\ast>0$.

\smallskip

\noindent
\underline{Case 2.2.b:} Up to selecting a subsequence $\tilde\lambda_k\to\infty$.
In this case we need to define $v_k$ as in Case~2.1.b, which is now well-defined on the set 
$$
\Sigma_k:= \{y \in \R^N\::\:M_k^\frac{1-q}{2m}\tilde\lambda_k^{-1/2m}y+x_k\in
\Omega\}
$$ 
and satisfies \eqref{eq:new-eq-2.2-b} on this set. By our assumptions
on the boundary of $\Omega$ near $\bar x$, we have 
$$
\dist(0,\partial \Sigma_k)=M_k^\frac{q-1}{2m} \tilde\lambda_k^{1/2m} x_{k,1}=
\rho_k \tilde\lambda_k^{1/2m}=:\tau_k \qquad \text{for all $k$.}
$$
Passing to a subsequence, we may assume that either $\tau_k \to \infty$ or
$\tau_k \to \tau\ge 0$ as $k \to \infty$. In the former case we come to a
contradiction as in Case 2.1.b, since then $v_k$ is well defined and
bounded on a sequence of expanding balls. In the latter case we
proceed completely analogously as in Case 2.2.a with $\rho_k$
replaced by $\tau_k$ for every $k$. The only difference is that in
this case, modulo a linear
change of variables, we end up with a bounded strong classical
solution of $(-\Delta)^m v = 1$ in $\R^N_+$. Again a contradiction is
reached via Lemma~\ref{Liou}(b).

Since in all cases we obtained a contradiction, the proof of
Theorem~\ref{non_ex} is finished.\qed

\section{Proof of the existence result} \label{sec:ex}

In this section we complete the proof of Theorem \ref{non_ex}. Finding a solution $u\in C^{2m,\alpha}(\overline{\Omega})$ of \eqref{basic_lambda} is equivalent to finding a solution $u\in C^\alpha(\overline{\Omega})$ of the equation
\begin{equation}
[\Id-\cK_\lambda](u)= 0
\label{equiv}
\end{equation}
where for $\lambda \in \R$ the nonlinear operator $\cK_\lambda:C^\alpha(\overline{\Omega})\to 
C^\alpha(\overline{\Omega})$ is defined by
$$
\cK_\lambda(u)= L^{-1}w \quad \text{with}\quad w(x)=f(x,u(x))+\lambda.
$$
By assumption (H3) we may regard $L^{-1}: C^\alpha(\overline{\Omega})\to 
C^{2m,\alpha}(\overline{\Omega})$ as a bounded linear operator. Moreover, 
since the embedding $C^{2m,\alpha}(\overline{\Omega}) \hookrightarrow C^\alpha(\overline{\Omega})$ is compact, $\cK_\lambda$ is also compact for every $\lambda \in \R$. Let $\Lambda>0$ be as in 
Theorem \ref{non_ex} so that \eqref{equiv} has no solution for 
$\lambda\geq \Lambda$. By Theorem~\ref{reichel-weth-alt} there exists  
$K>0$ such that for all $\lambda\in [0,\Lambda]$ any solution $u\in C^{\alpha}(\overline{\Omega})$ of \eqref{equiv} satisfies $\|u\|_\infty\leq K$. By elliptic regularity and (H3) we may assume $\|u\|_{C^{\alpha}(\overline{\Omega})}\leq K$ by adjusting $K$. Consequently, we find that 
\begin{equation}
\label{injectivity}
[\Id-\cK_\lambda](u) \not= 0 \qquad \text{if $(u,\lambda) \in \left(B_{2K}(0) \times \{\lambda\}\right)\, \cup \,\left(
\partial B_{2K}(0) \times [0,\Lambda]\right)$},
\end{equation}
where $B_{2K}(0) \subset C^\alpha(\overline{\Omega})$ denotes the 
$2K$-ball with respect to $\|\cdot\|_{C^{\alpha}(\overline{\Omega})}$. 
The homotopy invariance of the Leray-Schauder degree and and \eqref{injectivity} imply 
$$
\degree(\Id-\cK_0, B_{2K}(0),0) = \degree(\Id-\cK_\Lambda), B_{2K}(0),0)=0.
$$
For these and other properties of the Leray-Schauder degree, we refer the reader to \cite[Chapter 2.8]{deimling} or \cite[Chapter 2]{nirenberg}. Next we note that $0$ is an isolated solution of \eqref{equiv} for $\lambda=0$. Indeed, assume that there exists a sequence of solutions $u_n$ of \eqref{equiv} with $\lambda=0$ and $\|u_n\|_{C^{\alpha}}\to 0$ as $n\to \infty$. Let $v_n := u_n/\|u_n\|_\infty$. Since by (H2) $f(x,s)=O(s^2)$ uniformly in $x\in \Omega$ for $s$ in bounded intervals, we conclude that $L v_n = f(x,u_n)/\|u_n\|_\infty \to 0$ as $n\to \infty$ so that $\|v_n\|_\infty\to 0$ as $n\to \infty$, which is a contradiction. Moreover, since $\partial_s f(x,0)=0$ by (H2), the derivative $d \cK_0(0): C^\alpha(\overline{\Omega})\to 
C^\alpha(\overline{\Omega})$ of $\cK_0$ at $u=0$ vanishes. Hence, for small $\epsilon>0$, we have by \cite[Theorem 2.8.1]{nirenberg}
$$
\degree(\Id-\cK_0, B_\epsilon(0),0)= \degree(\Id- d \cK_0(0), B_\epsilon(0),0) = \degree(\Id, B_\epsilon(0),0)=1.
$$
The additivity property of the topological degree now implies that 
$$
\degree(\Id-\cK_0,B_{2K}(0)\setminus \overline{B_\epsilon(0)},0) \not= 0,
$$ 
hence there exists $u \in B_{2K}(0)\setminus \overline{B_\epsilon(0)}$ such that $u-\cK_0(u)= 0$. Therefore $u$ is a 
nontrivial solution of \eqref{equiv}.\qed

\section{Proof of Theorem 5} 
\label{no-boundedness}
In section we show how Theorem~\ref{rn_plus} can be deduced from
\cite[Theorem 4]{ReichelWeth} with the help of the doubling lemma of
Pol\'{a}\v{c}ik,  Quittner and Souplet \cite{PQS}. We recall the
following simple special case of this useful lemma. 

\begin{lemma} (cf. \cite{PQS})
\label{doubling-simplified}
Let $(X,d)$ be a complete metric space and $M: X \to (0,\infty)$ be
bounded on compact subsets of $X$. Then for any $y \in X$ and any
$k>0$ there exists $x \in X$ such that 
$$
M(x) \ge M(y) \qquad \text{and}\qquad M(z) \le 2 M(x)\quad \text{for all $z  \in B_{k/M(x)}(x)$}.
$$
\end{lemma}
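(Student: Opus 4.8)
The plan is to prove the lemma by contradiction, following the iteration argument of Pol\'{a}\v{c}ik, Quittner and Souplet. Suppose the conclusion fails for some $y \in X$ and some $k > 0$; that is, for every $x \in X$ with $M(x) \ge M(y)$ there is a point $z \in B_{k/M(x)}(x)$ with $M(z) > 2M(x)$. I would use this to build a sequence $(x_n)_{n \ge 0}$ in $X$: set $x_0 := y$, and, given $x_n$ with $M(x_n) \ge M(y)$, apply the assumed failure at $x = x_n$ to pick $x_{n+1} \in B_{k/M(x_n)}(x_n)$ with $M(x_{n+1}) > 2 M(x_n)$. Since $M(x_0) = M(y)$, a trivial induction shows that the construction never terminates and that $M(x_n) \ge 2^n M(y) \ge M(y)$ for every $n$.

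The next step is to show that $(x_n)$ is a Cauchy sequence. By construction $d(x_{n+1}, x_n) \le k/M(x_n) \le k/(2^n M(y))$, hence for $m > n$
$$
d(x_n, x_m) \le \sum_{j = n}^{m-1} d(x_{j+1}, x_j) \le \frac{k}{M(y)} \sum_{j \ge n} 2^{-j} = \frac{k}{2^{n-1} M(y)},
$$
and the right-hand side tends to $0$ as $n \to \infty$. Thus $(x_n)$ is Cauchy, and since $(X,d)$ is complete it converges to some $x_\infty \in X$.

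Finally I would extract the contradiction. The set $K := \{x_\infty\} \cup \{x_n : n \ge 0\}$ is compact, being a convergent sequence together with its limit, so by hypothesis $M$ is bounded on $K$. But $M(x_n) \ge 2^n M(y) \to \infty$, so $M$ is actually unbounded on $K$ --- a contradiction, which proves the lemma. I do not anticipate any serious obstacle here; the only point that needs care is that the iterates must be shown to \emph{converge} rather than merely to exist, since it is exactly the compactness of $K = \{x_n : n \ge 0\} \cup \{x_\infty\}$ that allows the local boundedness hypothesis on $M$ to be invoked.
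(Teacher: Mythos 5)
Your proof is correct. Note, however, that the paper does not reprove this statement at all: it simply observes that the lemma is the special case $D=\Sigma=X$ (hence $\Gamma=\Sigma\setminus D=\emptyset$ and $\dist(y,\Gamma)=\infty$) of \cite[Lemma 5.1]{PQS}. What you have written is, in effect, the proof of that cited lemma specialized to $\Gamma=\emptyset$: the negation of the conclusion yields an iteration $x_{n+1}\in B_{k/M(x_n)}(x_n)$ with $M(x_{n+1})>2M(x_n)$, the doubling of $M$ forces the step sizes $k/M(x_n)\le k/(2^nM(y))$ to be summable, completeness gives a limit $x_\infty$, and the compact set $\{x_n\}\cup\{x_\infty\}$ contradicts the local boundedness of $M$. (In the general PQS lemma the same scheme is run with $M$ weighted by $\dist(\cdot,\Gamma)$, which is why the full statement is more involved; with $\Gamma=\emptyset$ that weighting disappears and one lands exactly on your argument.) So your route is self-contained and elementary where the paper's is a one-line reduction to an external result; the only point you rightly flag — that the iterates must converge so that their closure is compact — is precisely where completeness of $X$ enters, and you handle it correctly.
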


This follows by taking $D= \Sigma= X$ in \cite[Lemma
5.1]{PQS}, so that $\Gamma:=\Sigma \setminus D= \emptyset$ and
therefore $\dist(y,\Gamma)=\infty$ for all $y \in X$.

\medskip

We now may complete the proof of Theorem~\ref{rn_plus}. Suppose by
contradiction that there exists an unbounded solution $u$ of
\eqref{eq:liouville-half-space}, and put $M:=u^{\frac{q-1}{2m}}:
\overline{\R^N_+} \to \R$. Then there exists a sequence $(y_k)_k\subset \R^N_+$ such 
that $M(y_k) \to \infty$ as $n \to \infty$. By
Lemma~\ref{doubling-simplified}, applied within the underlying
complete metric space $X:=\overline{\R^N_+}$, there exist another sequence $(x_k)_k \subset \R^N_+$ such that 
$$
M(x_k) \ge M(y_k) \qquad \text{and}\qquad M(z) \le 2 M(x_k)\quad \text{for
  all $z \in B_{k/M(x_k)}(x_k) \cap \overline{\R^N_+}$.}
$$
We then define $\rho_k:= x_{k,1}M(x_k)$, the affine halfspace 
$H_k:= \{\zeta \in \R^N\::\: \zeta_1 > -\rho_k\}$ and the function
$$
\tilde u_k: \overline{H}_k \to \R,\qquad \tilde u_k(\zeta)=\frac{u(x_k + \frac{\zeta}{M(x_k)})}{u(x_k)}   
$$
for $k \in \N$. Then $\tilde u_k$ is a nonnegative solution of  
\begin{equation}
\label{eq:begin-delta-tilde}
\left \{
  \begin{aligned}
   &(-\Delta)^m \tilde u_k=\tilde u_k^q &&\quad \text{in $H_k$,}\\
   &\tilde u_k= \frac{\partial}{\partial \zeta_1}\tilde u_k= \dots =
   \Bigl(\frac{\partial}{\partial \zeta_1}\Bigr)^{m-1}\tilde u_k= 0 &&\quad
   \mbox{ on }\partial H_k  
\end{aligned}
\right.
\end{equation}  
such that 
$$
\tilde u_k(0)=1 \quad \text{and}\quad \tilde u_k(\zeta) \le 2^{\frac{2m}{q-1}}
\quad \mbox{ for all } \zeta \in H_k \cap B_k(0).
$$ 
We may now pass to a subsequence and distinguish two cases:\\
\underline{Case 1:}  $\rho_k \to \infty$ as $k \to \infty$. In this
case Corollary~\ref{adn_local}(i) implies that
the sequence $(\tilde u_k)_k$ is locally $W^{2m,p}$-bounded on $\R^N$, therefore
we can extract a convergent subsequence $\tilde u_k \to \tilde u$ in 
$C^{2m-1,\alpha}_{loc}(\R^N)$, where $\tilde u$ is a
solution of \eqref{eq-rn} satisfying $u(0)=1$. By Theorem~\ref{rn} we obtain a contradiction.\\
\underline{Case 2:} $\rho_k \to \rho \ge 0$ as $k \to \infty$. In the case
we perform a further change of coordinates, defining 
$$
v_k(z):= \tilde u_k(z-\rho_k e_1) \mbox{ for } z \in
  \overline{\R^N_+}, 
$$
where again $e_1=(1,0,\dots,0) \in \R^N$ is the first coordinate vector. Then $v_k$ is a nonnegative solution of  
$$\left \{
  \begin{aligned}
   &(-\Delta)^m v_k=v_k^q &&\quad \text{in $\R^N_+$,}\\
   &v_k= \frac{\partial}{\partial z_1}v_k= \dots =
   \Bigl(\frac{\partial}{\partial z_1}\Bigr)^{m-1}v_k= 0 &&\mbox{ on }\partial \R^N_+,  
\end{aligned}
\right.
$$
while
$$
v_k(\rho_k e_1)=1 \quad \text{and}\quad v_k(z) \le 2^{\frac{2m}{q-1}}
\quad \text{for all $z \in B_k(\rho_k e_1) \cap \overline{\R^N_+}$.}
$$
Using now Corollary~\ref{adn_local}(ii), we deduce that
the sequence $(v_k)_k$ is locally $W^{2m,p}$-bounded in $\overline{\R^N_+}$. In particular
$|\nabla v_k|$ remains bounded independently of $k$ in a
neighborhood of the origin, which in view of the boundary conditions
implies that $\rho= \lim \limits_{k \to \infty}\rho_k>0$. 
We can therefore extract a convergent subsequence $v_k \to v$ in 
$C^{2m-1,\alpha}_{loc}(\overline{\R^N_+})$, where $v$ is a
solution of \eqref{eq:liouville-half-space} satisfying 
$$
v(\rho e_1) = 1 \qquad \text{and}\qquad v(z) \le 2^{\frac{2m}{q-1}} 
\quad \text{for $z \in \R^N_+$.}
$$
This contradicts \cite[Theorem 4]{ReichelWeth}, and the proof is finished.\qed




\begin{thebibliography}{99}
\bibitem{ADN} S.~Agmon, A.~Douglis and L.~Nirenberg: {\sl Estimates near the 
boundary for solutions of elliptic partial differential equations satisfying 
general boundary conditions. I.} Comm. Pure Appl. Math. {\bf 12} (1959), 
623--727. 
\bibitem{deimling}K.~Deimling: Nonlinear functional analysis. Springer-Verlag, Berlin, 1985. 
\bibitem{EFJ} D.E. Edmunds, D. Fortunato, E. Jannelli: {\sl Critical exponents, critical dimensions and the biharmonic operator.}  Arch. Rational Mech. Anal. \textbf{112} (1990), 269--289. 
\bibitem{gazzola-grunau-squassina} F. Gazzola, H.-C. Grunau and M. Squassina: 
{\sl Existence and nonexistence results for critical growth biharmonic equations.} Calc. Var. Partial Differential Equations \textbf{18} (2003), 117--143.
\bibitem{GS1} B.~Gidas and J.~Spruck: {\sl A priori bounds for positive 
solutions of nonlinear elliptic equations.}  Comm. Partial Differential 
Equations {\bf 6} (1981), 883--901.
\bibitem{grunau} H.-C. Grunau, {\sl Positive solutions to semilinear polyharmonic Dirichlet problems involving critical Sobolev exponents.} Calc. Var. Partial Differential Equations \textbf{3} (1995), 243--252.
\bibitem{grunau-sweers-97} H.-C. Grunau, G. Sweers: {\sl Classical solutions for some higher order semilinear elliptic equations under weak growth conditions.} Nonlinear Anal. \textbf{28} (1997), 799--807. 
\bibitem{nirenberg} L. Nirenberg: Topics in nonlinear functional analysis. Lecture Notes, 1973--1974. Courant Institute of Mathematical Sciences, New York University, New York, 1974. 
\bibitem{oswald} P. Oswald: {\sl On a priori estimates for positive solutions of a semilinear biharmonic equation in a ball.}  Comment. Math. Univ. Carolin. \textbf{26} (1985), 565--577.
\bibitem{PQS} P.~Pol\'{a}\v{c}ik, P.~Quittner and Ph.~Souplet: {\sl Singularity and decay estimates in superlinear problems via Liouville-type theorems, I: Elliptic equations and systems.} Duke Math. J. {\bf 139} (2007), 555--579.
\bibitem{ReichelWeth} W.~Reichel and T.~Weth: {\sl A priori bounds and a Liouville theorem on a half-space for higher-order elliptic Dirichlet problems.} Math. Z. \textbf{261} (2009), 805--827.
\bibitem{RR} M.~Renardy and R.C.~Rogers: An introduction to partial differential equations. Springer-Verlag, New-York, 1993.
\bibitem{soranzo} R. Soranzo: {\sl A priori estimates and existence of positive solutions of a superlinear polyharmonic equation.} 
Dynam. Systems Appl. \textbf{3} (1994), 465--487.
\bibitem{WeiXu} Juncheng Wei and Xingwang Xu: {\sl Classification of solutions 
of higher order conformally invariant equations.} Math. Ann. {\bf 313} (1999), 
207--228.
\bibitem{weth} T.~Weth: {\sl Nodal solutions to superlinear biharmonic equations via decomposition in dual cones.}  Topol. Methods Nonlinear Anal.  \textbf{28}  (2006), 33--52. 
\end{thebibliography}
\end{document}